\begin{document}

\title{Exact enumeration of RNA secondary structures by helices and loops
}

\titlerunning{RNA secondary structures by helices and loops}        

\author{Ricky X. F. Chen$^*$ \and Christian M. Reidys \and Michael S. Waterman
}


\institute{Ricky X. F. Chen$^*$  \at
              School of Mathematics, Hefei University of Technology\\
Hefei, Anhui 230601, P.~R.~China \\
              \email{chenshu731@sina.com}           
           \and
           Christian M. Reidys \at
             $^1$Biocomplexity Institute and Initiative and Dept.~of Math.\\
             University of Virginia, Charlottesville, VA 22904, USA\\
             \email{duckcr@gmail.com}
             \and
            Michael S. Waterman$^{1,2}$ \at
             $^2$Dept. Quantitative and Computational Biology\\
University of Southern California, Los Angeles, CA 90089, USA\\
\email{msw@usc.edu}
}

\date{Received: date / Accepted: date}

\maketitle

\begin{abstract}
Enumerative studies of RNA secondary structures were initiated four decades ago by Waterman and his coworkers.
Since then, RNA secondary structures have been explored according to many different structural characteristics,
for instance, helices, components and loops by Hofacker, Schuster and Stadler, orders by
Nebel, saturated structures by Clote, the $5^{\prime}$-$3^{\prime}$ end distance by Clote, Ponty and Steyaert, and the rainbow spectrum by Li and Reidys. However, the majority of the contributions are asymptotic results, and it is harder to derive explicit formulas.
In this paper, we obtain exact formulas counting RNA secondary structures
with a given number of helices as well as a given joint size distribution of helices and loops,
while some related asymptotic results due to Hofacker, Schuster and Stadler have been known for about twenty years.
Our approach is combinatorial, analyzing a recent bijection between RNA secondary structures and plane trees
discovered by the first author and proposing a variation of Chen's bijective approach of counting trees by forests of simple trees.
\keywords{RNA secondary structure \and Plane tree \and Helix \and Loop \and Base pair \and Partial stack}
\subclass{92B05 \and 05C05 \and 05A15}
\end{abstract}

\section{Introduction}

Ribonucleic acid (RNA) plays an important role in various biological processes
within cells, ranging from catalytic activity to gene expression.
RNA is described by its sequence of bases: A (adenine), U (uracil),
 G (guanine), and C (cytosine). These single-stranded
molecules fold onto themselves forming helical structures, by forming base pairs where A pairs with U while G pairs with C (and sometime
the non-Watson-Crick base pair G with U).  The sequence of bases of
the RNA molecule is known as primary structure, and it is determined experimentally.
A configuration of a molecule with helical structures consistent with a planar graph is known as a secondary structure. RNA sequences have a chemical orientation from their sugar-phosphate backbones. This orientation is designated by end labels $5^{\prime}$ and $3^{\prime}$, with the $5^{\prime}$ end typically appearing on the left.

More than four decades ago,
Waterman and his coworkers pioneered the combinatorics of
RNA secondary structures (Smith and Waterman 1978, Stein and Waterman 1979, Waterman 1978, Waterman 1979). Since then, the combinatorics of RNA secondary structures has been one of the most important topics in computational biology, see Clote (2006), Clote et al.~(2012), Hofacker et al.~(1998),  Heitsch and Poznanovi\'{c} (2013), Han and Reidys (2012), Lorenz et al.~(2008), Li and Reidys (2018), Liao and Wang (2003, 2004), Nebel (2003), and references therein.
In particular, enumeration of RNA secondary structures by some structural characteristics, like hairpins and cloverleaves (Smith and Waterman 1978), and base pairs (Schmitt and Waterman 1994) was first studied. Subsequently, further filtrations of RNA secondary structures were analyzed, as for instance, helices, components, loops by Hofacker, Schuster and Stadler (1998), orders of structures by
Nebel (2003), hairpins and cloverleaves by Liao and Wang (2003), saturated structures by Clote (2006), the $5^{\prime}$-$3^{\prime}$ end distance by Clote, Ponty and Steyaert (2012), Han and Reidys (2012), and the rainbow spectrum by Li and Reidys (2018).
The space of all RNA secondary structures was explored using topology by Penner and Waterman (1993).

It is fair to say that the recursive nature of RNA secondary structures is well understood. Thus, it is possible to use recursion to count secondary structures by certain structural characteristics, and then obtain functional relations of the corresponding generating functions.
However, it is not necessarily easy to derive exact formulae as in Schmitt and Waterman (1994)
and the majority of the contributions in the field is concerned with asymptotic results.
In Schmitt and Waterman (1994), the exact number of secondary structures over a sequence of length $n$ that have $k$ base pairs is obtained by establishing a bijection (see Section~\ref{sec2}) between secondary structures and plane trees,
and then enumerating the corresponding plane trees, e.g.~by Chen's bijective approach (Chen 1990).

In this paper, we focus on helices and loops of RNA secondary structures.
Helices and loops are important characteristics of RNA secondary structures
that have already been extensively studied in many works, as their numbers
and sizes are relevant for computing the free energy of secondary structures and
other purposes (e.g. Zuker, Mathews and Turner 1999).
In particular, for the first time, we obtain exact formulae counting RNA secondary structures with a given number of helices as well as a given joint length distribution of helices and loops.
The asymptotics on the average number of helices (stacks) and the probability for a helix (stack) to have a given length were obtained by Hofacker, Schuster and Stadler (1998) two decades ago.
In addition, we introduce partial stacks as a new characteristic and enumerate secondary structures according to the number and sizes of partial stacks, which
refines the result of Schmitt and Waterman (1994).

Our approach is combinatorial using the new bijection (see Section~\ref{sec2}) between secondary structures and plane trees
discovered by the first author (Chen 2019) and our variation of Chen's bijective approach (Chen 1990) of counting trees by forests of simple trees (see Section~\ref{sec3}).
Through the new bijection in Chen (2019), the key is to realize that the helices of secondary structures are encoded in
the structures of the children of even-level vertices while the loops are encoded in the structures of the children of odd-level vertices of the corresponding plane trees.

\section{Bijections between secondary structures and trees}\label{sec2}

In this section, we review two bijections between RNA secondary structures and plane trees, the Schmitt-Waterman bijection (Schmitt and Waterman 1994)
and the new bijection in Chen (2019).
We first recall the definition of RNA secondary structures following Waterman (1978). Let $[n] = \{ 1,2, \ldots, n\}$. An \emph{RNA secondary
structure} of length $n$ is a simple graph with vertices in $[n]$ and edges in $Y$ satisfying
\begin{itemize}
\item if $(i,j)\in Y$, then $|i-j|\geq 2$;
\item if $(i,j)\in Y$ and $(k,l)\in Y$,
where $i < j$ and $k < l$, and $[i,j] \bigcap [k, l]\neq \varnothing$, then either
$[i,j]\subset [k, l]$ or $[k, l] \subset [i,j]$ (where $[i,j]$ denotes the interval $\{r: i\leq r \leq j\}$).
\end{itemize}
We typically draw an RNA secondary structure in the following manner: we place all vertices in a horizontal line
and we draw an edge as an arc in the upper half-plane. Then, the second condition in the above definition guarantees that any two arcs do not cross with each other.
An arc determines a \emph{base pair}.
The vertex of an arc with a smaller label is called the \emph{left-end} of the arc, and a vertex not adjacent to any edge is called an \emph{isolated base}. In addition, if $(i,j)$ is an arc, we say that another arc $(i_1,j_1)$ (resp. an isolated base $k$) is covered by $(i,j)$ if $[i_1,j_1]\subset [i,j]$ (resp. $k\in [i,j]$), and we also say that the arcs $(i,j)$ and $(i_1, j_1)$ nest with each other.

A \emph{plane tree} $T$ can be recursively defined as an unlabeled tree with one distinguished vertex called the \emph{root} of $T$, where the unlabeled trees obtained by deleting the root as well as its adjacent edges from $T$ are linearly ordered, and they are plane trees with the vertices adjacent to the root of $T$ in $T$ as their respective roots.  In a plane tree $T$, the number of edges in the unique path from a vertex $v$ to the root of $T$ is called the \emph{level} of $v$, and the vertices adjacent to $v$ on a lower level are called the \emph{children} of $v$. The vertices on level $2i$ (resp.~$2i-1$) for $i\geq 0$ are called even-level (resp.~odd-level) vertices. A vertex without any child is called a \emph{leaf}, and an \emph{internal vertex} otherwise. The \emph{outdegree} of a vertex is the number of children of the vertex, i.e.,~one less than the degree of the vertex.
We will draw plane trees with the root on the top level, i.e., level $0$, and with the children of a level $i$ vertex arranged on level $i+1$ left-to-right following their linear order.

{\bf The Schmitt-Waterman bijection (Schmitt and Waterman 1994).} For a given RNA secondary structure, add an arc (i.e.~$(0,n+1)$) covering all bases and view each arc and isolated base as a vertex in a tree rooted at the vertex corresponding to the arc $(0,n+1)$, where the left-to-right children of a vertex $v$ in the tree are the vertices corresponding to the left-to-right arcs and isolated bases directly covered by $v$ (if $v$ is an arc). Therefore,
the Schmitt-Waterman bijection maps an RNA secondary structure with $k$ isolated bases to a plane tree with $k$ leaves.

{\bf Chen's bijection (Chen 2019).} Let $R$ be an RNA secondary structure of length $2a+k$ with $k$ isolated bases. We construct a plane tree $\varphi(R)$ as follows:
\begin{itemize}
	\item[S1:] Add an arc $(0, 2a+k+1)$. Label the isolated bases in $R$ with $b_1,b_2,\ldots, b_k$ left-to-right,
	and label the arcs with $e_0, e_1,\ldots, e_a$ based on the left-to-right order of their left-ends, $e_0$ being $(0, 2a+k+1)$.
	\item[S2:] Let $b_1$ be the root of $\varphi(R)$, and generate $k_1$ children for $b_1$ if there are $k_1$ arcs covering the isolated base $b_1$, where the children from left to right correspond to these $k_1$ arcs from the outermost to the innermost and are labeled correspondingly.
	\item[S3:] For $j=2$ to $k$, put a new child to the left of all existing children of the vertex that corresponds to the innermost arc covering the isolated base $b_j$ in the current partially constructed tree and label the newly generated child with $b_j$. Next generate $k_j$ children for the vertex $b_j$ if there are $k_j$ unused arcs (i.e., those with labels not appearing in the current partial tree) covering the isolated base $b_j$, where again the children from left to right correspond to these $k_j$ arcs from the outermost to the innermost and are labeled accordingly.
\end{itemize}
Then we have: (i) the vertices $b_i$ for all $i$ correspond to the even-level vertices, (ii) the sequence $e_0e_1\cdots e_a$ will be obtained if the children of even-level vertices (in the order $b_1b_2\cdots b_k$) are collected left-to-right sequentially; (iii) the sequence $b_1b_2\cdots b_k$ will be obtained if the even-level vertices are searched by depth-first search from right to left. (i) and (ii) are straightforward, and (iii) can be shown by induction. As a result, the labels of the vertices can be easily and uniquely recovered after being removed. Therefore, the obtained structure with labels removed is a plane tree with $a+k$ edges.

We refer to Schmitt and Waterman (1994) and Chen (2019) for details about the bijections, see Figure~\ref{fig1}.
Combining the two bijections leads to a new bijection on plane trees Chen (2019).
The new bijection in Chen (2019) reveals that the joint distribution of horizontal and vertical decompositions
is in a sense the same as the joint distribution of odd- and even-level vertices over plane trees.

	\begin{figure}[!htb]
		\centering
		\includegraphics[width=1.0\textwidth]{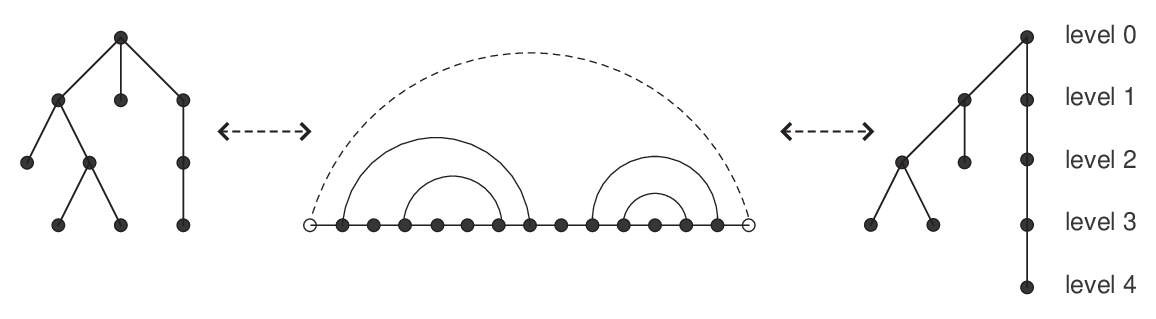}
		\caption{The Schmitt-Waterman bijection (left) and Chen's bijection (right).}
\label{fig1}
	\end{figure}

The results derived here, however, are based on Chen's bijection
and not easy to obtain using the Schmitt-Waterman bijection.

\section{Exact enumeration}\label{sec3}

For simplicity, unless otherwise stated explicitly, we view an RNA secondary structure of length $n$ as the one with the auxiliary arc $(0, n+1)$. So, every RNA secondary structure automatically has at least
one arc.

A \emph{partial stack} is a sequence of consecutive bases $x, x+1, \ldots, x+k-1$ for some $x\geq 0,\; k>0$ that are left-ends of some arcs while $x-1$ and $x+k$
are isolated bases (or not existing). Equivalently, a partial stack is a maximal set of mutually nesting
arcs whose left-ends are consecutive. The number $k$ is called the \emph{length} of the partial stack.
The concept of partial stacks here relates to helices already widely studied in the literature.
A \emph{helix} (or a stack) in an RNA secondary structure is a set of arcs mutually nesting with each other such that
there are no isolated bases between two adjacent arcs (i.e.~one arc and the arc directly covered by it).
That is, both the left-ends and right-ends of a helix are consecutive.

Loops in RNA secondary structures have been extensively studied, as it is important for certain energy models predicting the folded secondary structure
of a given RNA (primary) base sequence. A \emph{loop} consists of a set of isolated bases that are directly covered by the same arc (Hofacker et al.~1998), i.e.~the innermost arc covering these isolated bases.
The \emph{length} of the loop is the size of the set, and
we refer to the arc as \emph{the arc of the loop}.
The \emph{degree} of a loop is one larger than the number of arcs directly covered by the arc of the loop (Hofacker et al.~1998).

Loops have been classified into different types: hairpin loops, interior loops, bulges, and multi-loops. An hairpin loop is a loop where the arc of the loop does not cover any arcs, i.e.~degree one.
An interior loop is a loop where there is exactly one arc directly covered by the arc of the loop, i.e.~degree two.
Bulges are special interior loops.
Other loops, i.e.~degree larger than two, are called multi-loops.

In this paper, for our purpose of enumeration, we define the \emph{size} of a loop to be the number of elements, either isolated bases or arcs, immediately covered by the arc of the loop, that is, one less than the sum of the degree and the length of the loop.
Furthermore, we view every arc as the arc of some loop, thus loops of length zero (i.e.~empty loops)
are allowed.

In the following, we explicitly enumerate RNA secondary structures according to parameters concerning partial stacks, helices and loops.
We first show how Chen's bijection immediately allows us to derive results by applying the Lagrange inversion formula. Next, we obtain the rest of results in a completely combinatorial manner by decomposing certain plane trees into forests of even simpler trees.

\subsection{Combinatorics on loops and partial stacks}
We first enumerate RNA secondary structures with constraints on partial stacks and loops, e.g.~the maximal length and the number.

We recall the following version of the multivariate (bivariate) Lagrange inversion formula (Bender and Richmond 1998, Gessel 1987).
Let $g(x_1,x_2)$, $f_1(x_1,x_2)$, $f_2(x_1,x_2)$ be formal power series in $x_1,~x_2$ such that $f_i(0,0)\neq 0$. Then the set of equations $w_i=t_if_i(w_1,w_2)$ for $1\leq i \leq 2$ uniquely determine the $w_i$ as
formal power series in $t_1,~t_2$, and
$$
[t_1^p t_2^q]g(w_1, w_2)=[x_1^p x_2^q] g(x_1,x_2)f_1^p(x_1,x_2)f_2^q(x_1,x_2)	\det
\begin{Bmatrix}
1-\frac{x_1}{f_1}\frac{\partial f_1}{\partial x_1} & -\frac{x_1}{f_2}\frac{\partial f_2}{\partial x_1}\\
-\frac{x_2}{f_1}\frac{\partial f_1}{\partial x_2} & 1-\frac{x_2}{f_2}\frac{\partial f_2}{\partial x_2}
\end{Bmatrix},
$$
where $[t_1^p t_2^q]$ denotes the coefficient of $t_1^p t_2^q$.

The following lemma follows from Chen's bijection.
\begin{lemma}\label{lem:3lem1}
The number of RNA secondary structures with $b+1$ ($b\geq 0$) base pairs, $k$ isolated bases and $l$ partial stacks
is the same as the number of plane trees with $b+k$ edges where there are $l$ even-level internal vertices
out of a total of $k$ even-level vertices.
\end{lemma}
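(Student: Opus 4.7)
The plan is to leverage Chen's bijection $\varphi$ and reduce the claim to a local statistic-preservation statement. Under $\varphi$, an RNA secondary structure with $b+1$ base pairs (counting the auxiliary arc) and $k$ isolated bases maps bijectively to a plane tree with $b+k$ edges, the isolated bases becoming the $k$ even-level vertices and the arcs becoming the $b+1$ odd-level vertices. To prove the lemma I therefore only need to verify that $R$ has exactly $l$ partial stacks if and only if $\varphi(R)$ has exactly $l$ internal even-level vertices.

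Let $b_1<\cdots<b_k$ be the isolated bases of $R$ in order, let $P_1=\{0,1,\ldots,b_1-1\}$ and $P_j=\{b_{j-1}+1,\ldots,b_j-1\}$ for $2\le j\le k$, so each $P_j$ collects the non-isolated positions immediately preceding $b_j$ (with $P_1$ including the left-end $0$ of the auxiliary arc). Each position of $P_j$ is either a left-end (L) or a right-end (R). The heart of the proof is a structural claim: \emph{each $P_j$ reads $R^{a_j}L^{l_j}$} for some integers $a_j,l_j\ge 0$ (and $a_1=0$, since position $0$ is an L). To see this, suppose some L inside $P_j$ were followed by some R inside $P_j$; then the non-crossing condition would force at least one L--R matched pair lying entirely within $P_j$. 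The internally-matched pairs would then form a non-crossing submatching of a consecutive block of non-isolated positions, and every non-crossing matching must contain an innermost, adjacent matched pair. But such a pair is an arc of length one, forbidden by the condition $|i-j|\ge 2$.

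With the shape $R^{a_j}L^{l_j}$ in hand, the correspondence is immediate from Chen's construction: the children of the even-level vertex $b_j$ are precisely the arcs whose left-ends lie in $P_j$ and whose right-ends exceed $b_j$; by the structural claim these are exactly the $l_j$ arcs whose left-ends form the terminal $L$-block of $P_j$, and their left-ends are consecutive positions forming a partial stack whenever $l_j\ge 1$. Conversely, every partial stack of $R$ is a maximal L-run, and the claim shows that each $P_j$ carries at most one such run (its terminal block); the analogous length-obstruction argument shows that the region after $b_k$ contains no L at all. Hence $b_j$ is an internal even-level vertex if and only if $l_j\ge 1$, if and only if $P_j$ supports a partial stack, giving the desired bijection between partial stacks of $R$ and internal even-level vertices of $\varphi(R)$; summing over $j$ finishes the proof.

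The main hurdle is the structural claim that each $P_j$ reads $R^{a_j}L^{l_j}$, which is precisely where the length-$\ge 2$ hypothesis on arcs enters essentially. Once this is available, everything else is a direct reading of Chen's bijection and an unpacking of the definitions of partial stack and internal vertex.
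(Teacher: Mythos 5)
Your proof is correct and follows the same route the paper intends: Lemma~\ref{lem:3lem1} is stated there without proof as an immediate consequence of Chen's bijection, and your argument supplies exactly the missing verification — that partial stacks are the maximal runs of consecutive left-ends and that these are in bijection with the even-level vertices of $\varphi(R)$ having nonempty children-sets. The $R^{a_j}L^{l_j}$ structural claim (resting on the $|i-j|\ge 2$ condition, and its analogue for the segment after $b_k$) is the right way to make this precise, and it also substantiates the paper's likewise unproved Lemma~\ref{lem:3lem3}.
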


\begin{theorem}\label{thm:3thm1}
The number of RNA secondary structures with $b+1$ base pairs, $k$ isolated bases and $l$ partial stacks
is given by
\begin{align*}
\begin{cases}
\frac{1}{l-1} {l+b-1 \choose l-2} {b+1 \choose l}{b+k-1 \choose k-l}, & \mbox{for $l>1$};\\
{b+k-1 \choose k-1}, & \mbox{for $l=1$}.
\end{cases}
\end{align*}
\end{theorem}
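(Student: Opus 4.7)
The plan is to reduce the count to a plane-tree enumeration via Lemma \ref{lem:3lem1} and then apply the Lagrange inversion formula to a trivariate generating function.

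By Lemma \ref{lem:3lem1}, it suffices to enumerate plane trees with $b+k$ edges in which exactly $k$ vertices lie on even levels, among which exactly $l$ are internal (so $k-l$ are even-level leaves). I let $E(z,x,y)$ denote the generating function for even-rooted plane trees, with $z$ marking edges, $x$ marking internal even-level vertices, and $y$ marking even-level leaves. The alternating-level decomposition yields
$$E = y + \frac{xz}{1-z-zE},$$
since an even-rooted subtree is either a single leaf (weight $y$) or has weight $x$ at its root with a non-empty sequence of children, each attached by a $z$-weighted edge to an odd root, which in turn admits an arbitrary sequence of $z$-weighted edges to further even-rooted subtrees. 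Setting $w = E - y$, this reduces to $w = z\phi(w)$ with $\phi(w) = w^{2} + (1+y)w + x$.

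Since $\phi(0) = x \neq 0$, the Lagrange inversion formula (with $x$ and $y$ as parameters) gives $[z^{n}]w = \tfrac{1}{n}[w^{n-1}]\phi(w)^{n}$. Expanding $\phi(w)^{n}$ by the trinomial theorem, the requirement of extracting $x^{l}w^{n-1}$ uniquely determines the three exponents---namely $l$, $n-2l+1$, $l-1$ for $x$, $(1+y)w$, $w^{2}$ respectively---so a single surviving multinomial coefficient multiplies $(1+y)^{n-2l+1}$. Extracting $[y^{k-l}]$ from this binomial power and setting $n = b+k$ produces
$$[x^{l}y^{k-l}z^{b+k}]\,w \;=\; \frac{(b+k-1)!}{l!\,(l-1)!\,(b-l+1)!\,(k-l)!}.$$

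It remains to rewrite this ratio in the form stated in the theorem. A routine factorial manipulation shows that the expression equals $\frac{1}{l-1}\binom{l+b-1}{l-2}\binom{b+1}{l}\binom{b+k-1}{k-l}$ when $l > 1$; for $l=1$ the factor $(l-1)! = 1$ allows the same identity to collapse directly to $\binom{b+k-1}{k-1}$. The chief obstacle is the generating-function set-up: one must carefully separate the contributions of internal even vertices, even leaves, and (unmarked) odd-level vertices in the alternating-level decomposition, and verify that the odd-level recursion imposes no additional constraint beyond an arbitrary sequence of even-rooted children. Once the equation $w = z\phi(w)$ is pinned down, the subsequent coefficient extraction and factorial reshuffling are entirely mechanical.
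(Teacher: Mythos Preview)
Your argument is correct, but it follows a different route from the paper's. The paper first peels off the $k-l$ even-level leaves by a combinatorial insertion argument (distributing them among the $b+l$ sectors around odd-level vertices, which produces the factor $\binom{b+k-1}{k-l}$ directly), and only then counts the leafless trees $C_l(b,l)$ via a \emph{bivariate} Lagrange inversion applied to the coupled system $w_1 = t_1\,w_2/(1-w_2)$, $w_2 = t_2/(1-w_1)$, with the associated $2\times 2$ Jacobian determinant. You instead keep all three statistics in a single generating function and collapse the alternating-level recursion to the quadratic $w = z\bigl(w^2+(1+y)w+x\bigr)$, so that ordinary (univariate) Lagrange inversion over the parameter ring $\mathbb{Q}[x,y]$ suffices and a single trinomial term survives. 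Your path is shorter and avoids the Jacobian computation; the paper's path has the advantage that the binomial $\binom{b+k-1}{k-l}$ acquires an explicit bijective meaning, and the two-variable system it sets up is reused with minor variations for the later bounded-degree results (Theorems~\ref{thm:max-h}, \ref{thm:max-l}, \ref{thm:max-hl}).
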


\begin{proof}
Based on Lemma~\ref{lem:3lem1}, we enumerate the corresponding plane trees $T$.
Let $C_l(b,k)$ be the desired number. Then, ${C}_l(b,l)$ is the number of plane trees of $b+l$
edges where there are $b+1$ odd-level vertices and $l$ even-level internal vertices.
We first have
$$
C_l(b,k)={b+l+k-l-1 \choose k-l} {C}_l(b,l)={b+k-1 \choose k-l} {C}_l(b,l).
$$
That is, from each of the ${C}_l(b,l)$ trees without even-level leaves, we can insert $k-l$
leaves at the sectors around the odd-level vertices to obtain plane trees with
$k$ even-level vertices and $l$ even-level internal vertices.
Note that the total number of sectors around the odd-level vertices
is the total degree of the odd-level vertices. The latter equals half of the total degree of all vertices, i.e.~$b+l$, whence
each will generate ${b+k-1 \choose k-l}$ distinct desired plane trees.
Next, we compute ${C}_l(b,l)$.

Given two sets $E$ and $O$ of vertices, we call a plane tree $T$ a set-alternating tree if the vertices on any path starting from the root of $T$ alternate in the two sets. Let $\mathscr{P}_E$ denote the set of set-alternating plane trees with root in $E$ and every $E$-vertex is an internal vertex and let  $\mathscr{P}_O$ denote the set of set-alternating plane trees with root in $O$ and every $E$-vertex if any is internal.
Consider the generating function
$$
w_1(t_1,t_2) =\sum_{T\in \mathscr{P}_E} t_1^{\mbox{\#vertices in $E$ in $T$}}t_2^{\mbox{\#vertices in $O$ in $T$}}.
$$
Clearly, the number $C_l(b,l)$ of plane trees of $b+l$
edges where there are $b+1$ odd-level vertices and $l$ even-level internal vertices is the same as the number of set-alternating trees of $b+l$
edges with root in $E$ such that there are $b+1$ odd-level vertices and $l$ even-level internal vertices. Thus, we have ${C}_l(b,l)=[t_1^l t_2^{b+1}] w_1$.

Note that the subtrees of a tree $T\in \mathscr{P}_E $ are trees in $\mathscr{P}_O$ and there is at least one
such a subtree as we require every $E$-vertex in $T$ is internal.
Let $w_2(t_1, t_2)$ be the generating function for trees in $\mathscr{P}_O$, i.e.,
$$
w_2(t_1,t_2) =\sum_{T\in \mathscr{P}_O} t_1^{\mbox{\#vertices in $E$ in $T$}}t_2^{\mbox{\#vertices in $O$ in $T$}}.
$$
Then, we first have
$$
w_1=t_1(w_2+w_2^2+\cdots)=t_1\frac{w_2}{1-w_2}.
$$
Similarly, the subtrees of a tree $T\in \mathscr{P}_O $ are trees in $\mathscr{P}_E$, but there could be by definition no subtrees at all.
Thus, we have the equation
$$
w_2=t_2(1+w_1+w_1^2+\cdots)=t_2\frac{1}{1-w_1}.
$$
In terms of the above bivariate Lagrange inversion formula, we obtain
$$
g(x_1,x_2)=x_1, \quad f_1(x_1,x_2)=\frac{x_2}{1-x_2}, \quad f_2(x_1,x_2)=\frac{1}{1-x_1}.
$$
Compute
\begin{align*}
	[t_1^pt_2^q]w_1&=[x_1^px_2^q] g\cdot f_1^p\cdot f_2^q \cdot
	\det
	\begin{Bmatrix}
		1-\frac{x_1}{f_1}\frac{\partial f_1}{\partial x_1} & -\frac{x_1}{f_2}\frac{\partial f_2}{\partial x_1}\\
		-\frac{x_2}{f_1}\frac{\partial f_1}{\partial x_2} & 1-\frac{x_2}{f_2}\frac{\partial f_2}{\partial x_2}
	\end{Bmatrix}\\
	&= [x_1^px_2^q] \frac{x_2^p}{(1-x_2)^p}\frac{x_1}{(1-x_1)^q}\left(1-\frac{x_1(1-x_2)}{(1-x_1)}\frac{1}{(1-x_2)^2} \right)\\
	&=[x_1^{p-1} x_2^{q-p}]\bigl( (1-x_1)^{-q}(1-x_2)^{-p}-x_1(1-x_1)^{-q-1} (1-x_2)^{-p-1} \bigr)\\
	&={q+p-2 \choose p-1}{q-1 \choose q-p}- {q+p-2 \choose p-2}{q \choose q-p}\\
	&=\frac{1}{p-1} {p+q-2 \choose p-2} {q \choose q-p}.
\end{align*}
Note that the last formula only holds for $p>1$. In case of $p=1$, the second last formula gives us one.
Setting $p=l$ and $q=b+1$ completes the proof.
 \end{proof}

We next analyze partial stacks and first note:
\begin{lemma}\label{lem:3lem3}
Let $R$ be an RNA secondary structure of length $n$.
The length of a partial stack uniquely corresponds to the outdegree of an even-level vertex in $\varphi(R)$.
\end{lemma}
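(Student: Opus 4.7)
The plan is to read off parent--child adjacency directly from step S3 of Chen's bijection: the children of an even-level vertex $b_j$ are exactly the arcs covering $b_j$ that are not already in the partial tree, i.e., those arcs whose leftmost covered isolated base (in the left-to-right order $b_1,\ldots,b_k$) is $b_j$. Hence the children of $b_j$ form a nesting chain $f_1\supset\cdots\supset f_m$ in which every $f_i$ covers $b_j$ but no earlier isolated base. The goal is then to show that this chain is exactly the partial stack immediately preceding $b_j$, which produces a bijection between partial stacks of $R$ and internal even-level vertices of $\varphi(R)$ matching lengths to outdegrees.

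For the forward direction, I start from a partial stack with consecutive left-ends $x,x+1,\ldots,x+m-1$. The key structural observation is that position $x+m$ must be an isolated base: maximality of the stack forbids another left-end there, and if $x+m$ were the right-end of some arc $(u,x+m)$ then that arc would cross the innermost stack arc $f_m=(x+m-1,\,\cdot\,)$, violating the nesting condition. Calling this isolated base $b_j$, the fact that all positions strictly between $x$ and $x+m$ are left-ends of the stack arcs forces $b_j$ to be the leftmost isolated base covered by each $f_i$, so the $m$ arcs of the stack are precisely the children of $b_j$ in $\varphi(R)$.

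For the converse, I fix $b_j$ at position $q_j$ with outdegree $m\ge 1$ and children $f_1\supset\cdots\supset f_m$ having left-ends $p_1<\cdots<p_m$. The main obstacle is to show that $p_1,\ldots,p_m,q_j$ are consecutive positions and that the chain cannot be extended below $p_1$. I examine a hypothetical position $r$ strictly between adjacent $p_i,p_{i+1}$ (or between $p_m$ and $q_j$) and rule out each case in turn: $r$ cannot be an isolated base (else $f_i$ would cover one earlier than $b_j$); $r$ cannot be the left-end of an arc outside $\{f_1,\ldots,f_m\}$ (if such an arc covers $b_j$ it already lies in the list, and otherwise its interior sits inside the isolated-base-free interval $(p_i,q_j)$, contradicting the elementary induction-on-span fact that every arc of a secondary structure covers at least one isolated base); and $r$ cannot be a right-end (the corresponding arc either crosses some $f_i$ or forces a new left-end strictly between $p_i$ and $p_{i+1}$, reducing to the previous case). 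An analogous argument rules out any arc with left-end $p_1-1$ mutually nesting with $f_1$, so $\{f_1,\ldots,f_m\}$ is a maximal partial stack of length $m$. Combining the two directions yields the desired bijection.
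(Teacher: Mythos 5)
Your argument is correct, and it is essentially the intended reading of Chen's bijection: the paper states this lemma without proof (as an immediate observation from step S3, in the same spirit as its proof of Lemma~\ref{lem:loop-length}), and your write-up correctly supplies the missing details — identifying the children of $b_j$ as the arcs whose leftmost covered isolated base is $b_j$, and checking in both directions that these are exactly the arcs of the partial stack immediately preceding $b_j$. The only caveat is cosmetic: your maximality check on the left side matches the paper's ``equivalent'' formulation of a partial stack (maximal nesting set with consecutive left-ends) rather than the literal first formulation, since $x-1$ may be a right-end rather than an isolated base; this is a slight imprecision in the paper's definition, not in your proof.
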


Now we are in a position to compute the number of RNA secondary structures
with every partial stack of length at most $h$.

\begin{theorem}\label{thm:max-h}
	The number $H_h(b,k)$ of RNA secondary structures $R$ with $b+1$ base pairs and $k$ isolated bases such that every partial stack has a length at most $h$ is given by
		\begin{align}
		H_h(b,k)=
			\frac{1}{b+k}{b+k \choose k}\sum_{i\geq 0}^{i\leq \frac{b+1}{h+1}}(-1)^i{k\choose  i}{b+k-i(h+1) \choose k-1}.	
		\end{align}		
\end{theorem}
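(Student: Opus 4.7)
By Lemma~\ref{lem:3lem1} and Lemma~\ref{lem:3lem3}, the partial stacks of $R$ correspond bijectively to the internal even-level vertices of $\varphi(R)$, and the length of each partial stack equals the outdegree of the associated even-level vertex. Since outdegree-zero even-level vertices trivially satisfy the bound, $H_h(b,k)$ equals the number of plane trees with $k$ even-level vertices (root included), $b+1$ odd-level vertices, and every even-level vertex of outdegree at most $h$.

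I would reuse the set-alternating generating-function framework from the proof of Theorem~\ref{thm:3thm1}. Let $w_1$ and $w_2$ enumerate such trees rooted at an $E$-vertex and at an $O$-vertex, respectively, with $t_1$ marking $E$-vertices, $t_2$ marking $O$-vertices, and the new outdegree-$\leq h$ restriction on every $E$-vertex. The structural decomposition (an $E$-root has $0,1,\ldots,h$ subtrees rooted at $O$-vertices; an $O$-root has any number of subtrees rooted at $E$-vertices) gives
\[
  w_1 = t_1 \cdot \frac{1 - w_2^{h+1}}{1 - w_2}, \qquad w_2 = \frac{t_2}{1 - w_1},
\]
so $H_h(b,k) = [t_1^k t_2^{b+1}] w_1$, which I extract via the bivariate Lagrange inversion formula recalled earlier, taking
\[
  f_1(x_1,x_2) = \frac{1 - x_2^{h+1}}{1 - x_2}, \qquad f_2(x_1,x_2) = \frac{1}{1 - x_1}, \qquad g(x_1,x_2) = x_1.
\]

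Two observations streamline the computation. First, $\partial f_1/\partial x_1 = \partial f_2/\partial x_2 = 0$, so the Jacobian determinant collapses to $1 - \frac{x_1 x_2}{f_1(1-x_1)} \cdot \partial f_1/\partial x_2$. Second, $f_1^{k-1} \partial f_1/\partial x_2 = \tfrac{1}{k} \partial(f_1^k)/\partial x_2$, and combined with the coefficient identity $[x_2^{b+1}]\, x_2\, \partial_{x_2}(f_1^k) = (b+1)[x_2^{b+1}] f_1^k$, the two resulting summands share the common factor $S := [x_2^{b+1}] f_1^k$, yielding
\[
  H_h(b,k) = \left(\binom{b+k-1}{k-1} - \frac{b+1}{k}\binom{b+k-1}{k-2}\right) S.
\]
A short factorial manipulation collapses the prefactor to $\frac{1}{b+k}\binom{b+k}{k}$, and expanding $f_1^k = (1 - x_2^{h+1})^k (1-x_2)^{-k}$ by the binomial theorem gives
\[
  S = \sum_{i \geq 0} (-1)^i \binom{k}{i}\binom{b+k-i(h+1)}{k-1},
\]
naturally truncated at $i \leq (b+1)/(h+1)$, which completes the derivation.

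\textbf{Main obstacle.} The only delicate step is the bivariate coefficient extraction: pushing the Jacobian through, recognizing the derivative-then-shift in $x_2$ that isolates the common factor $S$, and verifying the prefactor simplification. Everything else reduces to routine binomial expansions.
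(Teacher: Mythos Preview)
Your proposal is correct and follows exactly the paper's approach: the same translation via Lemma~\ref{lem:3lem3} to plane trees with bounded even-level outdegree, the same pair of functional equations $\zeta_1=t_1(1-\zeta_2^{h+1})/(1-\zeta_2)$, $\zeta_2=t_2/(1-\zeta_1)$, and the same application of the bivariate Lagrange inversion formula. The paper in fact omits the coefficient extraction entirely, deferring to Chen~(2019); you have supplied the details correctly, including the prefactor simplification to $\tfrac{1}{b+k}\binom{b+k}{k}$ and the binomial expansion of $(1-x_2^{h+1})^k(1-x_2)^{-k}$.
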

\begin{proof}
Based on Lemma~\ref{lem:3lem3}, the number $H_h(b,k)$ equals the number of plane trees $T$ with $b+k$ edges where there are $k$ even-level vertices
and every even-level vertex has at most $h$ children. The latter can be computed as shown below:

Given two sets $E$ and $O$ of vertices, let
\begin{align*}
\zeta_1(t_1,t_2) &=\sum_{T\in \mathscr{H}_E} t_1^{\mbox{\#vertices in $E$ in $T$}}t_2^{\mbox{\#vertices in $O$ in $T$}},\\
\zeta_2(t_1,t_2) &=\sum_{T\in \mathscr{H}_O} t_1^{\mbox{\#vertices in $E$ in $T$}}t_2^{\mbox{\#vertices in $O$ in $T$}},
\end{align*}
where $\mathscr{H}_E$ denotes the set of set-alternating plane trees with root in $E$ and every $E$-vertex having at most $h$ children while  $\mathscr{H}_O$ denotes the set of set-alternating plane trees with root in $O$ and every $E$-vertex having at most $h$ children. Then,
$$
\zeta_1=t_1\frac{1-\zeta_2^{h+1}}{1-\zeta_2}, \quad \zeta_2=t_2\frac{1}{1-\zeta_1}.
$$
Clearly, we have
$$
H_h(b,k)=[t_1^k t_2^{b+1}] \zeta_1.
$$
Analogously, based on the above multivariate Lagrange inversion formula, we obtain the formula for $H_h(b,k)$.
 \end{proof}

We remark that Theorem~\ref{thm:max-h} is essentially the same as Theorem~$3.4$ regarding enumeration of certain class of trees in Chen (2019) and more details can be found there.
Note that for any RNA secondary structure with $b+1$ base pairs, a partial stack can have a length at most $b+1$.
Thus, $H_{b+1}(b,k)$ counts the total number of RNA secondary structures with $b+1$ base pairs and $k$ isolated bases. From this we can recover:

\begin{corollary}[Schmitt-Waterman 1994]\label{cor:narayana}
The number of RNA secondary structures with $b+1$ base pairs and $k$ isolated bases is given by the Narayana number
$$
\frac{1}{b+k}{b+k\choose k} {b+k\choose k-1}=H_{b+1}(b,k).
$$

\end{corollary}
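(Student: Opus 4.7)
The plan is to derive the corollary directly from Theorem~\ref{thm:max-h} by specializing the bound $h$ to a value large enough to impose no constraint on partial stacks. Since every RNA secondary structure in the enumeration has exactly $b+1$ arcs, the length of any partial stack is bounded above by $b+1$ (in fact by $b$ when isolated bases are present, but $b+1$ is a safe uniform bound). Hence requiring every partial stack to have length at most $h = b+1$ is vacuous, so the quantity $H_{b+1}(b,k)$ provided by Theorem~\ref{thm:max-h} equals the total number of RNA secondary structures with $b+1$ base pairs and $k$ isolated bases.

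Next, I would substitute $h = b+1$ into the formula
\[
H_h(b,k)=\frac{1}{b+k}\binom{b+k}{k}\sum_{i\geq 0}^{i\leq \frac{b+1}{h+1}}(-1)^i\binom{k}{i}\binom{b+k-i(h+1)}{k-1},
\]
and observe that the upper summation bound becomes $\frac{b+1}{b+2}<1$. Consequently the only index contributing to the sum is $i=0$, so every alternating inclusion-exclusion term disappears except the leading one, and the sum collapses to $\binom{b+k}{k-1}$.

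Combining these two observations yields
\[
H_{b+1}(b,k)=\frac{1}{b+k}\binom{b+k}{k}\binom{b+k}{k-1},
\]
which is exactly the stated Narayana number. The argument is essentially verification, so there is no real obstacle; the only point to be careful about is articulating precisely why $h=b+1$ is an unconditional bound on partial stack lengths (the fact that the total number of base pairs is $b+1$ immediately caps the size of any maximal nesting set of arcs with consecutive left-ends), and why the summation range $i\leq (b+1)/(b+2)$ forces $i=0$, which kills all the inclusion-exclusion corrections introduced in Theorem~\ref{thm:max-h}.
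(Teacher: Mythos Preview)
Your proposal is correct and matches the paper's own derivation: the paper observes that a partial stack can have length at most $b+1$, so $H_{b+1}(b,k)$ counts all secondary structures with $b+1$ base pairs and $k$ isolated bases, and the Narayana formula then follows by specializing Theorem~\ref{thm:max-h} at $h=b+1$, where only the $i=0$ term survives. There is nothing to add.
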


Based on Theorem~\ref{thm:3thm1} and Corollary~\ref{cor:narayana}, we immediately have
the following expression of the Narayana number in the form of a sum which seems new.
\begin{corollary} The following identity holds:
\begin{align*}
\frac{1}{b+k}{b+k\choose k} {b+k\choose k-1}={b+k-1 \choose k-1}+\sum_{ l >1} \frac{1}{l-1} {l+b-1 \choose l-2} {b+1 \choose l}{b+k-1 \choose k-l}.
\end{align*}
\end{corollary}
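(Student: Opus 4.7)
The plan is to read the identity as a double counting of the set of RNA secondary structures with $b+1$ base pairs and $k$ isolated bases, partitioned according to the number $l$ of partial stacks. Because every such structure carries at least one arc (the auxiliary arc $(0,n+1)$), it carries at least one partial stack, so $l$ ranges over the positive integers. Thus
$$
\sum_{l\geq 1} C_l(b,k) \;=\; \#\{\text{RNA secondary structures with } b+1 \text{ base pairs and } k \text{ isolated bases}\},
$$
where $C_l(b,k)$ is the quantity computed in Theorem~\ref{thm:3thm1}.

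Next I would evaluate the two sides using the results already established. By Corollary~\ref{cor:narayana}, the right-hand side of the display equals the Narayana number $\frac{1}{b+k}\binom{b+k}{k}\binom{b+k}{k-1}$. By Theorem~\ref{thm:3thm1}, the $l=1$ term of the sum equals $\binom{b+k-1}{k-1}$ (the exceptional case of the piecewise formula), while every $l>1$ term equals $\frac{1}{l-1}\binom{l+b-1}{l-2}\binom{b+1}{l}\binom{b+k-1}{k-l}$. Grouping the $l=1$ term separately and lumping the rest into a single sum produces exactly the identity claimed in the corollary.

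The only point requiring a remark rather than a calculation is finiteness of the sum: the factor $\binom{b+1}{l}$ vanishes for $l>b+1$ and $\binom{b+k-1}{k-l}$ vanishes for $l>k$, so only finitely many terms contribute. There is no genuine obstacle in this proof; the main input is the combinatorial fact that the partial-stack statistic partitions the structures enumerated by the Narayana number, and the substitution of Theorem~\ref{thm:3thm1} and Corollary~\ref{cor:narayana} does the rest.
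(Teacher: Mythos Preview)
Your proposal is correct and follows exactly the paper's own route: the corollary is stated immediately after Theorem~\ref{thm:3thm1} and Corollary~\ref{cor:narayana} with the remark that it follows from them, i.e., by summing the partial-stack refinement over all $l\ge 1$ and matching the total with the Narayana number. Your observation about finiteness of the sum is a harmless extra detail; nothing is missing.
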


Applying Theorem~\ref{thm:3thm1} and Corollary~\ref{cor:narayana}, it is easy to compute the total number of partial stacks and the total number of RNA secondary structures, respectively.
As a result, we can obtain the exact expected number of partial stacks.
Along these lines, we can obtain the expected length of a partial
stack in a random RNA secondary structure.

\begin{lemma}\label{lem:loop-length}
Let $R$ be an RNA secondary structure of length $n$.
The size of a loop uniquely corresponds to one plus the outdegree of an odd-level vertex in $\varphi(R)$.
\end{lemma}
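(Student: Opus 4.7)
The plan is to identify the children of the odd-level vertex $v_e$ corresponding to an arc $e$ of $R$ by tracing directly through steps S2--S3 of Chen's construction, then matching them against the direct elements of the loop of $e$. By property~(i) of the bijection, odd-level vertices of $\varphi(R)$ are in one-to-one correspondence with the arcs of $R$, the auxiliary arc $e_0$ included. Fix such an arc $e$ and list the elements immediately covered by $e$ from left to right as $x_1,x_2,\ldots,x_s$, where $s$ is the size of the loop of $e$ and each $x_i$ is either an isolated base directly covered by $e$ or an arc directly covered by $e$. For each $i$, let $b_{j_i}$ denote the leftmost isolated base of $R$ lying in (or equal to) $x_i$; this is well defined because $|p-q|\ge 2$ forces every arc to enclose at least one isolated base, and evidently $j_1<j_2<\cdots<j_s$. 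I will show that the children of $v_e$ in $\varphi(R)$ are exactly $b_{j_2},b_{j_3},\ldots,b_{j_s}$, yielding an outdegree of $s-1$ and hence the claim.

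The key preparatory observation, read off S2--S3, is the following time-of-placement rule: for any arc $e'$ of $R$, the vertex corresponding to $e'$ is inserted into $\varphi(R)$ exactly during the step that processes the leftmost isolated base enclosed by $e'$ (the auxiliary $e_0$ is placed alongside the root $b_1$ in S2). Equivalently, the vertex for $e'$ is already present at the start of step $j$ if and only if the leftmost isolated base inside $e'$ has index strictly less than $j$. Applied to $e$ itself, $v_e$ is inserted during the step processing $b_{j_1}$ and is therefore present at the start of every later step $j_i$ with $i\ge 2$.

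For the inclusion that every $b_{j_i}$ with $i\ge 2$ is a child of $v_e$, fix such an $i$. Any arc strictly inside $e$ that covers $b_{j_i}$ is either $x_i$ itself (if $x_i$ is an arc) or is nested strictly inside $x_i$. In the first case $x_i$ is inserted only during step $j_i$, after the parent of $b_{j_i}$ has already been chosen; in the second case every arc nested inside $x_i$ has leftmost enclosed isolated base of index at least $j_i$ and so is not yet present in $\varphi(R)$. Therefore $v_e$ is the innermost present arc covering $b_{j_i}$ at the instant $b_{j_i}$ is attached, and S3 installs $b_{j_i}$ as a new leftmost child of $v_e$.

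For the reverse inclusion, let $b_j$ with $j\ge 2$ be any child of $v_e$. Then $b_j$ is covered by $e$ and hence lies in a unique direct element $x_i$. If $j>j_i$, then $x_i$ must be an arc, which by the time-of-placement rule has been present in $\varphi(R)$ since the strictly earlier time $j_i$; this in-tree arc then lies strictly between $v_e$ and $b_j$ at time $j$, contradicting the minimality of $v_e$ as the innermost present arc covering $b_j$. Hence $j=j_i$, completing the enumeration. The main obstacle I expect is a clean formulation of the time-of-placement rule together with the case split on whether $x_i$ is an arc or an isolated base; once those are in place, the rest is straightforward bookkeeping on which vertices have been inserted by each step.
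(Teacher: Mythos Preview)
Your proof is correct and follows essentially the same approach as the paper: both identify the children of the odd-level vertex $v_e$ with the isolated bases $b_{j_2},\ldots,b_{j_s}$, one for each direct element of the loop of $e$ other than the leftmost. The paper's argument is terser---it simply asserts that each non-leftmost direct element ``induces a unique child'' and that everything under a directly-covered arc $e'$ lands in the subtree rooted at that child---whereas you make the mechanism explicit via the time-of-placement rule and carry out both inclusions separately; but the substance is the same.
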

\begin{proof}
Note that, by definition, each arc of an RNA secondary structure covers (not necessarily directly) at least
one isolated base. In Chen's bijection, an arc $e$ must be a child of the leftmost isolated base covered by $e$
in $\varphi(R)$. With the exception of the leftmost element (either an arc covering the leftmost isolated base or the leftmost isolated base itself) directly covered by $e$, any other element directly covered by $e$ induces a unique child of $e$ in $\varphi(R)$.
That is, an isolated base other than the leftmost one directly covered by $e$ clearly induces a child (a leaf) of $e$;
for an arc $ e'$ directly covered by $e$, the leftmost isolated base covered by $e'$ must induce a child
of $e$ in $\varphi(R)$ and everything else covered by $e'$ will be in the subtree rooted on the induced child.
Therefore, the number of children of $e$ in $\varphi(R)$ will be one less than the number of elements
directly covered by $e$ in $R$ (i.e.~the size of the loop having $e$ as its arc), and the lemma follows.
\end{proof}

\begin{theorem}\label{thm:max-l}
	The number $L_l(b,k)$ of RNA secondary structures $R$ with $b+1$ base pairs and $k$ isolated bases such that every loop has a size at most $l$ is given by
		\begin{align}
		L_l(b,k)=	\frac{1}{b+1}{b+k \choose b} \sum_{i\geq 0}^{i\leq \frac{k-1}{l}} (-1)^i {b+1\choose i} {b+k-1-il \choose k-1-il} \; .
 		\end{align}		
\end{theorem}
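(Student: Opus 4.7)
The plan is to mirror the proof of Theorem~\ref{thm:max-h} via Lemma~\ref{lem:loop-length}. Since the size of a loop equals one plus the outdegree of the corresponding odd-level vertex in $\varphi(R)$, the condition that every loop has size at most $l$ translates to the condition that every odd-level vertex in $\varphi(R)$ has at most $l-1$ children. Therefore $L_l(b,k)$ equals the number of plane trees with $b+k$ edges and exactly $k$ even-level vertices (hence $b+1$ odd-level vertices) in which every odd-level vertex has at most $l-1$ children.

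To count such trees, I would introduce the bivariate generating functions
\begin{align*}
\xi_1(t_1,t_2) &= \sum_{T\in \mathscr{L}_E} t_1^{\mbox{\#$E$-vertices}} t_2^{\mbox{\#$O$-vertices}},\\
\xi_2(t_1,t_2) &= \sum_{T\in \mathscr{L}_O} t_1^{\mbox{\#$E$-vertices}} t_2^{\mbox{\#$O$-vertices}},
\end{align*}
where $\mathscr{L}_E$ (respectively $\mathscr{L}_O$) denotes the set of set-alternating plane trees rooted in $E$ (respectively $O$) in which every $O$-vertex has at most $l-1$ children. Decomposing at the root (an $E$-rooted tree is an $E$-vertex followed by any sequence of $O$-rooted subtrees, while an $O$-rooted tree is an $O$-vertex followed by at most $l-1$ $E$-rooted subtrees) yields
\begin{equation*}
\xi_1 \;=\; t_1\cdot\frac{1}{1-\xi_2}, \qquad \xi_2 \;=\; t_2\cdot\frac{1-\xi_1^l}{1-\xi_1},
\end{equation*}
so that $L_l(b,k) = [t_1^k t_2^{b+1}]\xi_1$.

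Next I would invoke the bivariate Lagrange inversion formula stated in Section~\ref{sec3} with $g(x_1,x_2)=x_1$, $f_1=(1-x_2)^{-1}$, and $f_2=(1-x_1^l)/(1-x_1)$. Because $\partial f_1/\partial x_1=0$ and $\partial f_2/\partial x_2=0$, and since $(x_1/f_2)\partial f_2/\partial x_1 = x_1/(1-x_1) - lx_1^l/(1-x_1^l)$, the Jacobian determinant reduces to
\begin{equation*}
\det \;=\; 1 - \frac{x_2}{1-x_2}\left(\frac{x_1}{1-x_1}-\frac{l\,x_1^l}{1-x_1^l}\right).
\end{equation*}
Multiplying by $g\,f_1^k\,f_2^{b+1}$ and extracting $[x_1^k x_2^{b+1}]$ thus produces three coefficient sums, each of which can be unfolded by expanding $(1-x_1^l)^{b+1}$ or $(1-x_1^l)^b$ via the binomial theorem.

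The main hurdle is to collapse these three sums into the single closed form claimed. Setting $\Sigma_0 = \sum_{i\geq 0}(-1)^i\binom{b+1}{i}\binom{b+k-1-il}{k-1-il}$, the ``$1$'' of the determinant contributes $\frac{k}{b+1}\binom{b+k}{b}\Sigma_0$, while the $x_1/(1-x_1)$ part splits, via $\binom{b+k-1-il}{b+1} = \binom{b+k-1-il}{b}(k-1-il)/(b+1)$, into a piece proportional to $\Sigma_0$ and a weighted piece proportional to $\sum_i i(-1)^i\binom{b+1}{i}\binom{b+k-1-il}{b}$. Applying $i\binom{b+1}{i}=(b+1)\binom{b}{i-1}$ and reindexing $i\mapsto i+1$, the weighted piece cancels exactly against the contribution of $lx_1^l/(1-x_1^l)$, and the surviving terms consolidate to $\frac{1}{b+1}\binom{b+k}{b}\Sigma_0$, as claimed. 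The upper summation bound $i\leq (k-1)/l$ arises naturally because $[x_1^{k-1-il}](1-x_1)^{-(b+1)}=0$ whenever $k-1-il<0$.
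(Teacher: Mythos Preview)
Your proposal is correct and follows exactly the same route as the paper: translate via Lemma~\ref{lem:loop-length} to plane trees whose odd-level vertices have bounded outdegree, set up the identical pair of functional equations for $\xi_1,\xi_2$, and extract $[t_1^k t_2^{b+1}]\xi_1$ by bivariate Lagrange inversion. The paper's own proof in fact stops at ``employing the multivariate Lagrange inversion formula, we obtain the formula for $L_l(b,k)$'', so your explicit simplification of the three resulting sums (splitting $\binom{b+k-1-il}{b+1}$, using $i\binom{b+1}{i}=(b+1)\binom{b}{i-1}$, and reindexing to cancel the $lx_1^l/(1-x_1^l)$ contribution) supplies details the paper omits; all of those manipulations check out.
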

\begin{proof}
Based on Lemma~\ref{lem:loop-length}, the number $L_l(b,k)$ equals the number of plane trees $T$ with $b+k$ edges where there are $b+1$ odd-level vertices
and every odd-level vertex has at most $l-1$ children. The latter can be computed as shown below:

Given two sets $E$ and $O$ of vertices, let
\begin{align*}
\xi_1(t_1,t_2) &=\sum_{T\in \mathscr{L}_E} t_1^{\mbox{\#vertices in $E$ in $T$}}t_2^{\mbox{\#vertices in $O$ in $T$}},\\
\xi_2(t_1,t_2) &=\sum_{T\in \mathscr{L}_O} t_1^{\mbox{\#vertices in $E$ in $T$}}t_2^{\mbox{\#vertices in $O$ in $T$}},
\end{align*}
where $\mathscr{L}_E$ denotes the set of set-alternating plane trees with root in $E$ and every $O$-vertex having at most $l-1$ children while  $\mathscr{L}_O$ denotes the set of set-alternating plane trees with root in $O$ and every $O$-vertex having at most $l-1$ children. Then, we have
$$
\xi_2=t_2\frac{1-\xi_1^{l}}{1-\xi_1}, \quad \xi_1=t_1\frac{1}{1-\xi_2}.
$$
Clearly,
$$
L_l(b,k)=[t_1^k t_2^{b+1}] \xi_1
$$
and employing the multivariate Lagrange inversion formula, we obtain the formula for $L_l(b,k)$.
\end{proof}

For $l$ being sufficiently large, we recover Corollary~\ref{cor:narayana}.
Furthermore,

\begin{theorem}\label{thm:max-hl}
	The number $P_{h,l}(b,k)$ of RNA secondary structures $R$ with $b+1$ base pairs and $k$ isolated bases such that every partial stack has a length at most $h$ and every loop has a length at most $l$ is given by
		\begin{multline}
	P_{h,l}(b,k)=[x_1^{k-1}x_2^{b+1}] \frac{(1-x_2^{h+1})^k}{(1-x_2)^k}\frac{ (1-x_1^{l})^{b+1}}{(1-x_1)^{b+1}}\\
	\times \left(1-\frac{x_1x_2(1-x_2)(1-x_1)}{(1-x_1^l)(1-x_2^{h+1})}\Big[\frac{1-x_2^h}{(1-x_2)^2}-\frac{hx_2^h}{1-x_2}\Big] \Big[\frac{1-x_1^{l-1}}{(1-x_1)^2}-\frac{(l-1)x_1^{l-1}}{1-x_1}\Big]
	\right).	
		\end{multline}		
\end{theorem}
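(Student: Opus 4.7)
The plan is to combine the two constructions from the proofs of Theorems~\ref{thm:max-h} and~\ref{thm:max-l} into a single generating function setup. By Lemma~\ref{lem:3lem3} the partial-stack-length constraint translates to every even-level vertex in $\varphi(R)$ having outdegree at most $h$, while by Lemma~\ref{lem:loop-length} the loop-size constraint translates to every odd-level vertex having outdegree at most $l-1$. Thus $P_{h,l}(b,k)$ equals the number of plane trees with $b+k$ edges, $k$ even-level vertices and $b+1$ odd-level vertices, subject to both outdegree restrictions, and this can in turn be read as a count of set-alternating trees as before.

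First I would introduce $\eta_1(t_1,t_2)$ and $\eta_2(t_1,t_2)$ as the generating functions for set-alternating trees rooted respectively in $E$ and in $O$, weighting $E$-vertices by $t_1$ and $O$-vertices by $t_2$, and imposing both constraints simultaneously. Decomposing a tree at the root as in the proofs of Theorems~\ref{thm:max-h} and~\ref{thm:max-l} gives the coupled system
\begin{equation*}
\eta_1 \;=\; t_1\,\frac{1-\eta_2^{h+1}}{1-\eta_2}, \qquad \eta_2 \;=\; t_2\,\frac{1-\eta_1^{l}}{1-\eta_1},
\end{equation*}
and $P_{h,l}(b,k)=[t_1^kt_2^{b+1}]\eta_1$.

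Next I would invoke the bivariate Lagrange inversion formula recalled in Section~\ref{sec3} with
\begin{equation*}
g(x_1,x_2)=x_1,\qquad f_1(x_1,x_2)=\frac{1-x_2^{h+1}}{1-x_2},\qquad f_2(x_1,x_2)=\frac{1-x_1^{l}}{1-x_1}.
\end{equation*}
The nice feature here is that $f_1$ depends only on $x_2$ and $f_2$ only on $x_1$, so $\partial f_1/\partial x_1=\partial f_2/\partial x_2=0$ and the Jacobian determinant collapses to
$1 - \frac{x_1 x_2}{f_1 f_2}\frac{\partial f_1}{\partial x_2}\frac{\partial f_2}{\partial x_1}$.

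The main (routine) obstacle is simplifying the two partial derivatives into the bracketed form appearing in the statement. For $f_1=\sum_{j=0}^{h}x_2^j$ one differentiates and rewrites the result as
\begin{equation*}
\frac{\partial f_1}{\partial x_2}=\frac{1-x_2^{h}}{(1-x_2)^2}-\frac{hx_2^{h}}{1-x_2},
\end{equation*}
and symmetrically for $f_2$. Substituting these into the determinant, together with $\frac{x_1x_2}{f_1f_2}=\frac{x_1x_2(1-x_1)(1-x_2)}{(1-x_1^l)(1-x_2^{h+1})}$, yields exactly the factor in the theorem. Shifting the coefficient index via $g=x_1$ (which absorbs one power of $x_1$) gives $[x_1^{k-1}x_2^{b+1}]$ of the displayed expression, completing the proof. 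I expect no conceptual difficulty beyond the bookkeeping of the derivative simplification; specializing $h\to\infty$ or $l\to\infty$ should recover Theorems~\ref{thm:max-l} and~\ref{thm:max-h} respectively, which provides a useful sanity check.
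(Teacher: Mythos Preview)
Your proposal is correct and is precisely the argument the paper implicitly has in mind: in the paper Theorem~\ref{thm:max-hl} is stated without proof, immediately after Theorems~\ref{thm:max-h} and~\ref{thm:max-l}, as the evident common generalisation obtained by imposing both outdegree constraints simultaneously and applying the same bivariate Lagrange inversion. Your setup of the coupled system for $\eta_1,\eta_2$, the observation that the cross-dependence of $f_1,f_2$ makes the Jacobian collapse to $1-\tfrac{x_1x_2}{f_1f_2}\partial_{x_2}f_1\,\partial_{x_1}f_2$, and the derivative identity $\partial_{x}\bigl(\tfrac{1-x^{m+1}}{1-x}\bigr)=\tfrac{1-x^{m}}{(1-x)^2}-\tfrac{mx^{m}}{1-x}$ are all correct and give exactly the displayed formula after absorbing the factor $g=x_1$ into the coefficient extraction. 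One minor remark: the theorem statement says ``length'' of a loop, but the constraint you (correctly) impose via Lemma~\ref{lem:loop-length} is on the \emph{size}; this matches Theorem~\ref{thm:max-l} and the form of the answer, so the discrepancy is only terminological.
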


\subsection{Joint distribution of helices and loops}
Next we enumerate RNA secondary structures with a given joint size distribution
of helices and loops.

In a plane tree, we call a maximal set of (left-to-right) consecutive odd-level leaves
and an immediately following odd-level internal vertex if any
that are children of the same even-level vertex a \emph{E-block}.
By definition, for an even-level internal vertex $v$, if its rightmost child
is an internal vertex, then the number of E-blocks of $v$ (i.e.~determined by
$v$) is the number of odd-level internal vertices that are children of $v$;
otherwise, the number of E-blocks determined by $v$ is one plus the number of odd-level internal vertices that are children of $v$. Accordingly, we have the following lemma:

\begin{lemma}\label{lem:num-eblock}
The number of E-blocks in a plane tree equals the sum of the number of odd-level internal vertices and
the number of even-level vertices whose rightmost children are leaves.
\end{lemma}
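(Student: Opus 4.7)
The plan is to unpack the definition locally at each even-level internal vertex and then sum. Concretely, for a plane tree $T$, let $V_E$ denote the set of even-level internal vertices of $T$. Every E-block is a set of children of some $v\in V_E$, and two E-blocks at the same $v$ are disjoint because maximality forces a block to end precisely at the first internal child encountered after its leaves. Hence the set of E-blocks partitions naturally according to $v\in V_E$, and I would write
\[
\#\{\text{E-blocks in }T\}=\sum_{v\in V_E}\#\{\text{E-blocks determined by }v\}.
\]

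Next I would make the per-vertex count explicit by case analysis on the rightmost child of $v$, exactly as in the paragraph preceding the lemma. If the rightmost child of $v$ is an odd-level internal vertex, then each odd-level internal child of $v$ terminates a unique E-block (its leaf-predecessors together with itself), and every E-block arises this way; the count at $v$ is therefore the number of odd-level internal children of $v$. If the rightmost child of $v$ is an odd-level leaf, then the same correspondence accounts for all E-blocks except the trailing run of leaves at the right end, which forms one extra E-block; the count at $v$ is then one plus the number of odd-level internal children of $v$.

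Summing these two cases over $v\in V_E$ gives
\[
\#\{\text{E-blocks in }T\}=\sum_{v\in V_E}\#\{\text{odd-level internal children of }v\}+\#\{v\in V_E:\text{rightmost child of }v\text{ is a leaf}\}.
\]
Finally I would observe that every odd-level internal vertex $u$ has a unique parent, which lies at an even level, and must itself be an internal vertex (since $u$ is its child), so the first sum counts each odd-level internal vertex of $T$ exactly once. The second term is, by definition, the number of even-level vertices whose rightmost children are leaves (even-level leaves have no children and are excluded automatically). Combining these identifications yields the desired equality.

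I do not anticipate a real obstacle here; the only point that requires a little care is verifying that the local partition into E-blocks is exhaustive and disjoint at each $v\in V_E$, which follows directly from maximality in the definition. The bookkeeping that every odd-level internal vertex contributes to exactly one term in the first sum (via its unique even-level parent) is the only global ingredient.
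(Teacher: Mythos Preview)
Your proposal is correct and follows essentially the same reasoning as the paper, which treats the lemma as an immediate consequence of the per-vertex case analysis stated just before it (no separate proof is given). Your write-up simply makes that local count explicit and sums, which is exactly the intended argument.
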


Inspecting Chen's bijection $\varphi$, we realize that E-blocks encode the information of helices in the secondary structures.
\begin{proposition}\label{prop:eblock}
Let $R$ be an RNA secondary structure.
Then, the number of helices in $R$ is the number of E-blocks in $\varphi(R)$.
In addition, the size of a helix of $R$ uniquely
corresponds to the size of an E-block in $\varphi(R)$.
\end{proposition}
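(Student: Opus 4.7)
The plan is to prove both claims simultaneously by constructing a size-preserving bijection between the helices of $R$ and the E-blocks of $\varphi(R)$. First I would recall that, by Chen's bijection, an arc $e$ of $R$ appears in $\varphi(R)$ as a child of the even-level vertex $b_i$, where $b_i$ is the leftmost isolated base covered by $e$. Fixing an even-level vertex $b_i$, its children are therefore precisely the arcs of $R$ whose leftmost covered isolated base is $b_i$, listed from outermost to innermost. Any arc strictly nested between two such arcs would inherit $b_i$ as its own leftmost covered isolated base and would already appear in the list, so these children form a maximal direct-cover chain $e_{i_1}\supset e_{i_2}\supset\cdots\supset e_{i_k}$. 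Because a helix of $R$ has consecutive left-ends, all of its arcs share one common leftmost covered isolated base, so the arcs of each helix occupy a contiguous sub-chain of children of some $b_i$.

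The crucial step is to show that, for $1\le j<k$, the arcs $e_{i_j}$ and $e_{i_{j+1}}$ lie in the same helix if and only if $e_{i_j}$ is a leaf in $\varphi(R)$. The left-ends of $e_{i_j}$ and $e_{i_{j+1}}$ are automatically consecutive (neither an isolated base nor an entire arc can lie strictly between them without contradicting $b_i$ being leftmost), so the two arcs belong to a common helix precisely when their right-ends are also consecutive; equivalently, when the loop of $e_{i_j}$ contains only $e_{i_{j+1}}$, i.e.\ has size exactly $1$. By Lemma~\ref{lem:loop-length}, the number of children of $e_{i_j}$ in $\varphi(R)$ equals the size of its loop minus $1$, so the loop has size $1$ exactly when $e_{i_j}$ is a leaf, giving the desired equivalence.

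Consequently, the helix-partition of the chain $e_{i_1},\ldots,e_{i_k}$ breaks precisely between each internal vertex and its right neighbour, so each helix consists of a maximal run of consecutive leaves optionally followed by a trailing internal vertex --- exactly the description of an E-block of $b_i$. Summing over all even-level vertices $b_i$ yields the desired size-preserving bijection between helices of $R$ and E-blocks of $\varphi(R)$, proving both assertions of the proposition. I expect the main obstacle to be justifying the structural claim that the children of $b_i$ form the maximal direct-cover chain described above; once this is in place, the application of Lemma~\ref{lem:loop-length} handles the rest cleanly.
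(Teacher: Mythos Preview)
Your proof is correct and follows essentially the same route as the paper: both arguments identify the children of an even-level vertex $b_i$ with a partial stack, then show that the break between two consecutive children $e_{i_j},e_{i_{j+1}}$ separates helices precisely when $e_{i_j}$ is an internal vertex, so that helices coincide with E-blocks. The only noteworthy difference is in how that key equivalence is justified: the paper argues directly from Chen's bijection that an isolated base between the right-ends of $e_{i_j}$ and $e_{i_{j+1}}$ forces a child of $e_{i_j}$, whereas you invoke Lemma~\ref{lem:loop-length} to equate ``loop of $e_{i_j}$ has size $1$'' with ``$e_{i_j}$ is a leaf''. Your route is a clean reuse of an already-proved lemma; the paper's is a self-contained one-line observation. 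Both reach the same conclusion with the same underlying structure.
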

\begin{proof}
We have shown that the children of an even-level internal vertex corresponds to a
partial stack. Since a partial stack is a set of mutually nesting
arcs whose left-ends are consecutive, a helix is contained in a partial stack and a partial stack may consist of
multiple helices.
Thus, it is necessary to look at the right-ends of a partial stack,
since any maximal set of consecutive right-ends determines a helix.

If the right-ends of one arc $e$ and the arc immediately covered by $e$ in the same partial stack are
not consecutive, there must be at least one isolated base between the two right-ends.
According to Chen's bijection, the leftmost such an isolated base must induce a child of $e$
in $\varphi(R)$, so the arc $e$ is an odd-level internal vertex,
see Figure~\ref{fig:stack-e-block}.
\begin{figure}[!htb]
		\centering
		\includegraphics[width=0.9\textwidth]{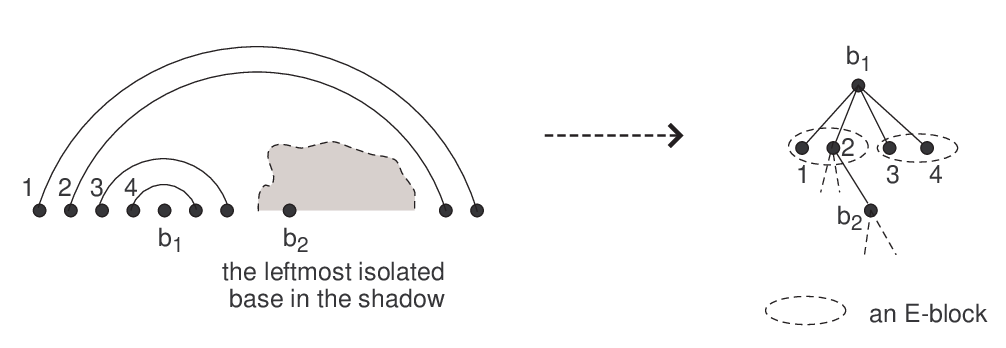}
		\caption{From helices to E-blocks}
\label{fig:stack-e-block}
	\end{figure}
Note that in a partial stack the arcs from the outermost to the innermost are arranged
from left to right as children of some even-level internal vertex.
Therefore, each odd-level internal vertex and the leaves to its left, i.e.~an E-block, determines a helix,
except for that the rightmost E-block (i.e.~the helix determined by the inner most arc) may not contain
an odd-level internal vertex (for instance, in Figure~\ref{fig:stack-e-block} vertex $4$ is not an internal vertex),
and the proposition follows.
\end{proof}

\emph{Remark.} Based on the Schmitt-Waterman bijection, it is easy to observe that helices (and partial stacks) correspond to certain
special paths in the corresponding plane trees. However, paths, in a sense, do not reflect the obvious recursive organization of plane trees
as E-blocks (or degrees) do, which makes enumeration harder.

Accordingly, counting secondary structures by helices can be done by counting (set-alternating) plane trees according to E-blocks.
The latter can be achieved by enumerating labelled set-alternating trees according to E-blocks,
as the labelled case and the unlabelled case just differ by a multiplicity.
A \emph{labelled set-alternating E-tree (resp.~O-tree)} is a plane tree where the even-level vertices carry distinguishable labels from a set $E$ (resp.~$O$)
and the odd-level vertices carry distinguishable labels from a set $O$ (resp.~$E$).
We shall enumerate labelled set-alternating E-trees based on our variation
of Chen's bijection (Chen 1990) dealing with uniformly (i.e.~using only one set of labels) labelled plane trees.

In order to state our bijection,
we set some notation.
A \emph{forest} is a set of trees.
An even-level internal vertex whose children are all leaves is
called \emph{young}.
A (labelled) \emph{small tree} is a (labelled) tree with only two levels in total.
A \emph{small set-alternating tree} is a set-alternating tree with
only two levels.
The \emph{size} of a small tree is the number of edges in the small tree.
We shall call a small set-alternating tree
with a root in $E$ (resp.~$O$) a small \emph{E-tree} (resp.~\emph{O-tree}).

Our bijection is between certain labelled set-alternating trees and certain forests consisting of some small E-trees and some small O-trees, where additional labels besides the ones used in the plane trees will appear in the forests. We shall mark each of the additional labels by $^*$ and refer to them as starred labels.
In contrast to Chen's bijection (Chen 1990), we need to deal with two sets of labels and carefully control their distribution
in the forests.
As the local structure of the children (e.g.~distribution of leaves and internal vertices) of an odd-level internal vertex is not relevant, we call all children of an odd-level internal vertex an O-block.
In the following, we sometimes refer to a vertex by its label.

\begin{theorem}\label{thm:bij2}
Suppose $E=[k]\subset E^*=[k]\bigcup \{(k+1)^*, \ldots, (k+s-1)^*\}$ and $O=[\overline{b+1} ] \subset O^*=[\overline{b+1}] \bigcup \{(\overline{b+2})^*, \ldots, (\overline{b+1+l_o})^*\}$. Then there is a bijection between the set $\mathbb{T}$ of set-alternating E-trees over $E \bigcup O$ with $b+k$ edges, in which
\begin{itemize}
\item[(a)] there are $s$ E-blocks, and
\item[(b)] there are $l_e$ even-level internal vertices, $y$ of which being young, and
\item[(c)] there are $l_o$
odd-level internal vertices ($b+1 \geq s\geq l_e \geq y$ and $s \geq l_o$),
\end{itemize}
and
the set of forests of small set-alternating trees over $E^* \bigcup O^*$, $\mathbb{F}$, in which:
\begin{itemize}
\item[(a*)] there are in total $s$ small E-trees and $l_e$ of them have roots from $E$, while $s-l_e$ of them have roots from $E^*\setminus E$, and
\item[(b*)] there are $l_o$ small O-trees, each of which having a root from $O$, and there are $l_o$ small E-trees having a unique starred leaf which is the rightmost leaf, and
\item[(c*)] $y$ small E-trees have no starred labels, and the smallest $y$ labels in $E^*\setminus E$ are leaves of some small O-trees, and
\item[(d*)] there are $s-y-l_o$ small E-trees which have starred roots and unstarred leaves. Furthermore, if $(k+m)^*$ is the root of such E-tree
and $m\neq s-1$, then $(k+m+1)^*$ is a leaf in some small O-tree.
\end{itemize}
\end{theorem}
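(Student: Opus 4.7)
The plan is to prove Theorem~\ref{thm:bij2} by constructing an explicit bijection $\Phi: \mathbb{T} \to \mathbb{F}$ together with its inverse $\Psi$, in the spirit of Chen's bijective approach (Chen 1990) for decomposing labelled plane trees into forests of simple trees, but refined so as to respect the set-alternating structure and the invariants $s$, $l_e$, $y$, $l_o$. The map $\Phi$ would be defined by cutting $T$ at each internal vertex: for every odd-level internal vertex $u$ of $T$, I form the small O-tree consisting of $u$ and its $T$-children, yielding the $l_o$ small O-trees rooted in $O$ required in (b*); for every even-level internal vertex $v$ of $T$, I partition its children into E-blocks and produce one small E-tree per E-block whose leaves are the members of that E-block. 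The leftmost E-block of $v$ receives the root $v \in E$, while every subsequent E-block receives a root from $E^* \setminus E$, giving $l_e$ small E-trees with unstarred root and $s - l_e$ with starred root (condition (a*)). A young $v$ has a single all-leaves E-block, so its small E-tree has unstarred root and no starred leaves, which produces exactly the $y$ Type A trees (first half of (c*)). Whenever an E-block ends in an odd-level internal vertex $u$, I replace the label $u$ at the rightmost leaf by a starred label from $O^* \setminus O$, producing the $l_o$ Type B trees of (b*). In every small O-tree, the leaf positions whose underlying labels are young even-level internal vertices are relabelled by the smallest elements of $E^* \setminus E$, placing exactly the $y$ smallest starred $E$-labels on small O-tree leaves (second half of (c*)), while the remaining starred $E^*$-labels serve as roots of the $s - y - l_o$ Type C trees and are assigned in the order in which the corresponding non-first E-blocks appear in a left-to-right depth-first traversal of $T$, so that the succession condition of (d*) becomes automatic.

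The inverse $\Psi$ runs $\Phi$ in reverse. Given $F \in \mathbb{F}$, the $l_o$ starred $O^*$-labels are paired canonically with the $l_o$ small O-trees, each of which is then grafted into the unique Type B small E-tree whose rightmost leaf carries the matching label. Likewise, the $y$ smallest starred $E^*$-labels appearing as small O-tree leaves are paired with the $y$ Type A small E-trees, and those small E-trees are grafted at the indicated leaf positions. For each Type C small E-tree with starred root $(k+m)^*$, condition (d*) furnishes the unique even-level internal vertex whose next E-block this piece represents: when $m \ne s-1$, the label $(k+m+1)^*$ appears as a leaf of some small O-tree, and following this chain back to an unstarred $E$-root in the traversal identifies where the Type C piece attaches as a subsequent E-block of the corresponding vertex.

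The main obstacle will be pinning down the canonical ordering of starred labels so that the succession rule of (d*) is forced by the depth-first traversal of $T$, and then verifying simultaneously that $\Phi(T)$ satisfies (a*)--(d*) and that $\Psi(F)$ produces a set-alternating E-tree in $\mathbb{T}$ with the claimed parameters $(s, l_e, y, l_o)$. Using Lemma~\ref{lem:num-eblock} and Proposition~\ref{prop:eblock} to translate $s$ and $l_o$ into structural features of $T$, the bookkeeping for (a*)--(d*) reduces to a careful case analysis by type of small E-tree, and the identities $\Psi \circ \Phi = \mathrm{id}$ and $\Phi \circ \Psi = \mathrm{id}$ then follow by a routine induction on the number $l_e + l_o$ of internal vertices of $T$.
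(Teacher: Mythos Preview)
Your proposal sketches a plausible Chen-style decomposition, but it diverges from the paper's construction at precisely the point that matters, and the divergence creates a genuine gap.

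The paper does \emph{not} assign starred labels via a depth-first traversal. It uses an iterative peeling procedure: at each step one locates the \emph{minimal} internal vertex (with respect to a fixed linear order on $E\cup O$) whose current leftmost block consists of leaves, strips that block off as a small tree, and only then assigns the next starred label to the residual vertex. The order in which starred $E$-labels are generated is therefore determined dynamically by this minimality criterion, interleaving blocks from different vertices. Property~(d*) is \emph{not} automatic: the paper devotes a separate argument to it, showing that when a rightmost all-leaf E-block $B$ of a non-young vertex is removed with starred root $(k+m)^*$, the vertex is still minimal at the next step, so the very next starred label $(k+m+1)^*$ is created as a leaf under the vertex's odd-level parent. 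Your DFS ordering does not obviously have this property, and you give no argument for it.

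Your bookkeeping also slips. You write that ``the remaining starred $E^*$-labels serve as roots of the $s-y-l_o$ Type~C trees,'' but there are $s-1$ starred $E$-labels and $s-l_e$ of them serve as E-tree roots (for both Type~C trees \emph{and} Type~B trees with starred root), while $l_e-1$ of them end up as leaves in small O-trees (one per non-root even-level internal vertex, not only the young ones). Condition~(c*) asserts only that the \emph{smallest} $y$ starred $E$-labels land on O-tree leaves, not that these are the only ones. Your scheme of relabelling only the young-vertex leaf positions with the smallest $y$ starred labels leaves the remaining $l_e-1-y$ such leaf positions and the starred roots of non-leftmost Type~B blocks unaccounted for.

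Finally, your inverse $\Psi$ is too vague to verify. The paper's inverse is again iterative and order-sensitive: one repeatedly finds the small tree with minimum root and no starred labels, merges it with the minimum remaining starred label, and the delicate point---that Type~C pieces end up as \emph{rightmost} E-blocks---again requires a dedicated argument exploiting~(d*). Your description of ``following the chain back'' does not specify this mechanism. To repair the proposal you would need to either adopt the paper's minimality-based peeling and reproduce its verification of~(d*), or give a precise DFS-based labelling together with a proof that it forces the succession rule in~(d*); the latter is not obvious and may in fact fail.
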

\begin{proof}
We first introduce a linear order `$<$' on $E\bigcup O$ such that $\overline{j}<\overline{j+1}$ and any element $i\in [k]$ is smaller than $\overline{j}\in [\overline{b+1}]$, i.e.~$i< \overline{j}$.
We also refer to the elements in $E^*$ (resp.~$O^*$) as E-labels (resp.~O-labels).

{\bf The map $h: \mathbb{T} \rightarrow \mathbb{F}$.}
For each $T \in \mathbb{T}$, we successively construct a forest $F \in \mathbb{F}$ of small trees according to the following iterative procedure $h$.
\begin{itemize}
\item[i.] Let $i_e,~i_o$ be non-negative integers initializing $i_e=0,\, i_o=0$ and initializing $F=\varnothing$. If $T$ is not a small E-tree carrying at most one starred leaf, execute step ii and step iii. Otherwise, add $T$ to $F$.
\item[ii.]
In $T$, find the minimal internal vertex whose the leftmost block (either E-block or O-block) of vertices are leaves. Here the blocks, in particular the E-blocks, still refer to the blocks (of vertices) inherited from the initial tree instead of those determined by definition in the current tree.
Remove the small tree determined by $v$ and the leftmost block from $T$, together with all labels, and add the small tree to $F$.
\item[iii.] If $v$ has a label in $[k]$, relabel $v$ with the label $(d+i_e+1)^*$ in the remaining tree but use the original unstarred label of $v$ for comparison in any further executions of step ii, and update $T$ as
the resulting tree, and set $i_e=i_e+1$. If $v$ has a label in $[\overline{b+1}]$, place a vertex with label $(\overline{t+i_o+1})^*$ in the remaining tree, and update $T$ as
the resulting tree, and set $i_o=i_o+1$.
\end{itemize}
It should be obvious that the tree $T$ always has a root in $E^*$, i.e.~$T$ is a E-tree, and the minimal internal vertex sought in step ii always exists.
The last small tree joining $F$, or equivalently the last updated $T \neq \varnothing$, is associated to the rightmost E-block of the root of the initial tree, and depending on if the rightmost child of the root is internal, the last small tree may have a starred leaf.
In addition, if the last small tree has a starred E-label (in the case of the root having more than one E-block), then the starred E-label must be $(k+s-1)^*$, i.e.~the last starred E-label.
In the end, we obtain $s$ small E-trees and $l_o$ small O-trees
as each block in the initial tree is associated to a unique small tree.
An illustration of the process is in Figure~\ref{fig-block}.
\begin{figure}[!htb]
		\centering
		\includegraphics[width=0.9\textwidth]{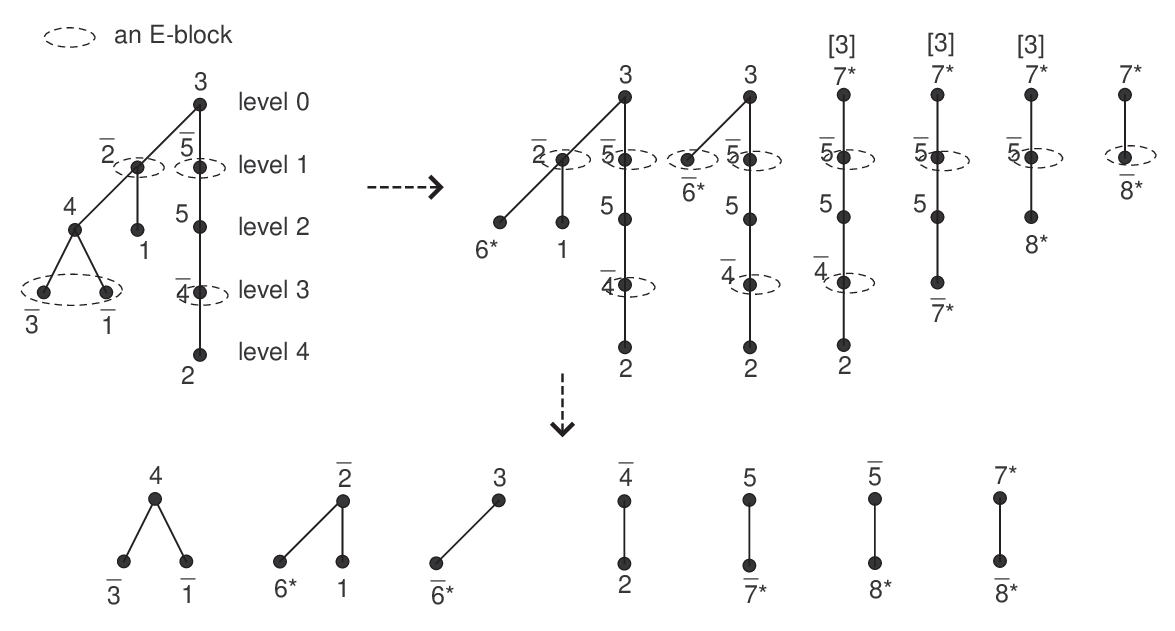}
		\caption{An example from a tree to a forest of small trees.}
\label{fig-block}
	\end{figure}
	
Note that there are four types of small trees: unstarred root and unstarred leaves, unstarred root and starred leaves,
starred root and starred leaves, and starred root and unstarred leaves.
In order to characterize these small trees, we need to know the quantity of the starred labels and unstarred labels
and how they distribute among the roots and leaves of the small trees.
We observe
\begin{itemize}
\item Each O-block induces a starred O-label so there are $l_o$ starred O-labels.
Each E-block, except the last removed one, induces a starred E-label so there are $s-1$ starred E-labels generated.
Thus the labels in the obtained forest constitute $E^*\bigcup O^*$.
\item
Note that each even-level internal vertex contributes exactly one small E-tree whose root has an
unstarred label.
Thus, there are $l_e$ small E-trees having roots from $E$, and (a*) holds.
\item Each odd-level internal vertex contributes exactly one small O-tree whose root has an
unstarred label.
Thus, there are $l_o$ small E-trees having roots from $O$.
Note that each E-block contains at most one odd-level internal vertex which implies
that each small E-tree contains at most one starred O-label.
As a result, there are exactly $l_o$ small E-trees whose rightmost leaf has a starred O-label and (b*) follows.
\item By construction, each young even-level internal vertex has only one E-block so each
young even-level internal vertex generates exactly one small E-tree containing no starred labels.
Note that these $y$ young internal vertices are removed before other even-level internal vertices.
Thus, the first (smallest) $y$ corresponding starred E-labels induced by the young internal vertices
are leaves in some small O-trees.
Therefore, we have (c*).
\item Since each small E-tree can contain at most one starred O-label,
there are $s-l_o-y$ small E-trees that have starred roots and unstarred leaves.
Any such small E-tree stems from the rightmost E-block $B$ of an even-level internal vertex, which is not young and whose rightmost child is a leaf, otherwise the rightmost leaf must have a starred O-label and/or the root has an unstarred label. In addition, if the starred root is $(k+m)^*$ with $m\neq s-1$,
the even-level internal vertex can not be the root of the initial tree, as discussed earlier.
Suppose the even-level internal vertex has an unstarred label $b$ and $B$ is its rightmost E-block.
In the procedure $h$, at some point, the E-block $C$ immediately to the left of $B$ is removed,
and at the same time, the vertex $b$ obtains a starred E-label $(k+m)^*$.
Then $b$ has only one E-block remaining.
We claim that the very next removed block must be the block $B$ hence we obtain a small E-tree
with root $(k+m)^*$, and at the same time create the starred label $(k+m+1)^*$ as a leaf of some odd-level internal vertex. Since just before the removal of $C$, the unstarred label $b$ is the minimal among those whose leftmost block of vertices are leaves. After the removal of $C$, this is still the case, as by construction, the vertices in $B$ are all leaves,
see Figure~\ref{fig:prop-d}.
As a result, (d*) holds.
\begin{figure}[!htb]
		\centering
		\includegraphics[width=0.9\textwidth]{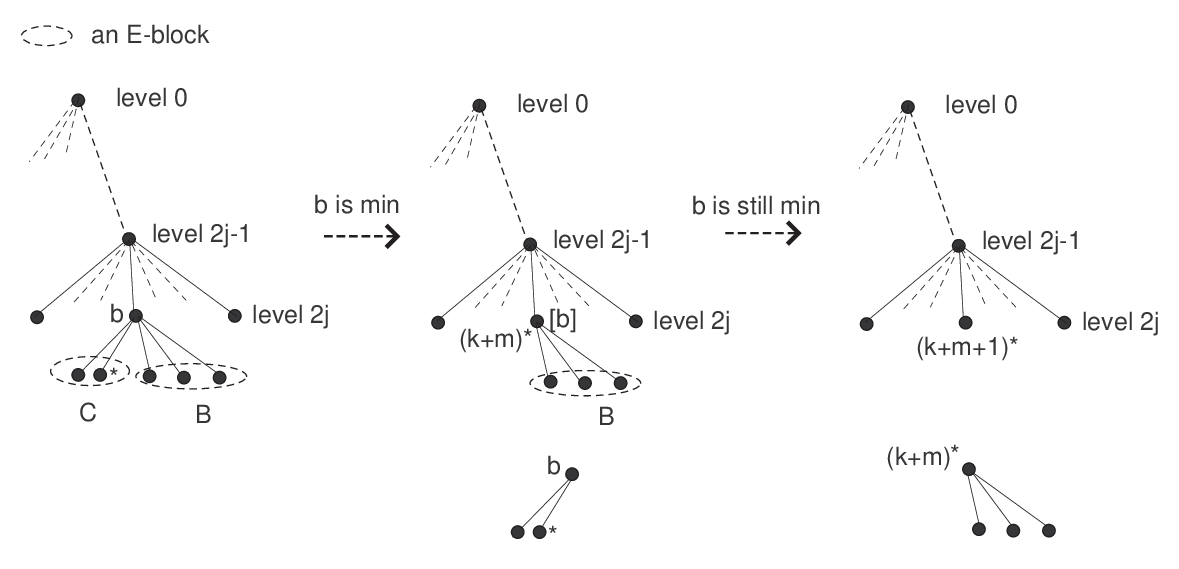}
		\caption{An illustration of property (d*).}
\label{fig:prop-d}
	\end{figure}
\end{itemize}

{\bf The map $g:  \mathbb{F}\rightarrow \mathbb{T}$.}
Next, we specify the procedure $g$ that maps a forest $F\in \mathbb{F}$ to a tree $T \in \mathbb{T}$. \begin{itemize}
\item[(i)] Find a tree in $F$  with the minimum root such that there is no vertex with a starred label in the tree.
If the root $v$ of the found tree is an $E$-vertex,  then merge the root
with the vertex having the minimum label $u^*$ in the set $\{ (k+1)^*, (k+2)^*,\ldots, (k+s-1)^*\}$ in $F$, and label the newly generated vertex by the label of $v$ (and discard $u^*$). In addition, if $u^*$ is also a root, then put
the found tree (with root $v$) on the left-hand side when merging. If the root $v$ of the found tree is an $O$-vertex, then merge the root
with the vertex having the minimum label in the set $\{ (\overline{b+2})^*, (\overline{b+3})^*,\ldots, (\overline{b+1+l_o})^*\}$, and label the newly generated vertex by the label of $v$. Update $F$ as the resulting forest of trees.
Here we remark that a tree without any starred label always exists, because there are in total $s+l_o$ trees and $s-1+l_o$
starred labels at the beginning, and later every time we decrease the number of
starred E-labels (resp.~O-labels) by one, we decrease the number of E-trees (resp.~O-trees) by one at the same time.
In addition, since the number of E-trees is always one larger
than the number of starred E-labels, we eventually obtain an E-tree.
\item[(ii)] Iterate (i) until $F$ becomes a single labelled plane tree $T$.
\end{itemize}
Note that there are two types of mergings, either vertically attaching a tree without
starred labels to a starred leaf of another tree, or horizontally merging the root of a tree without starred labels and
a starred root of another tree.
The numbers of even-level internal vertices and odd-level internal vertices in $T$ are determined by
the number of small trees with unstarred roots and are obviously $l_e$ and $l_o$.
It remains to check young even-level internal vertices and E-blocks.

\emph{Young internal vertices:}
According to the above procedure, we have to process these $y$ small E-trees without any starred labels first.
By assumption, the $y$ smallest starred labels in $E^* \setminus E$ are leaves in some O-trees.
Then, these $y$ small E-trees will be attached to these $y$ starred leaves sequentially.
Another fact validating this statement is that each merging of these $y$ mergings does not generate
an E-tree with an even smaller root than the ones of the remaining small E-trees.
As a consequence, each of these $y$ small E-trees contributes a young even-level internal vertex,
and this is the only way young internal vertices arise.
Thus, there are in total $y$ young even-level internal vertices.

\emph{E-blocks:} We first claim
any small E-tree with a starred root and unstarred leaves
will appear as the rightmost block of some even-level internal vertex.

Let $T_e$ be a small E-tree with a starred root $(s+m)^*$ and unstarred leaves.
Suppose in the procedure $g$, $(s+m)^*$ is merged with the root of some tree $T_b$ without any starred labels, with $T_e$ on the right hand side. Suppose the root of $T_b$ has unstarred label $b$. It is necessary that
$b$ is minimal right before the merge.
Note that the minimal tree without any starred labels that we can find next still has root $b$,
because the merge just completed did not generate any smaller (in terms of the labels of the roots) trees.
Next, if $m=s-1$, there will be no merges involving E-labels anymore, since we have used up all starred E-labels, the E-block $T_e$ is accordingly the rightmost one.
If $m\neq s-1$, then the next found minimum starred E-label must be $(k+m+1)^*$.
Since the latter is a leaf of some odd-level vertex ($\overline{j}$), the tree with root $b$ will be attached to it.
As a result,
the E-block involved in the last step remains the rightmost block and will not be impacted later,
whence the claim,
see Figure~\ref{fig:bad-good}.

\begin{figure}[!htb]
		\centering
		\includegraphics[width=0.9\textwidth]{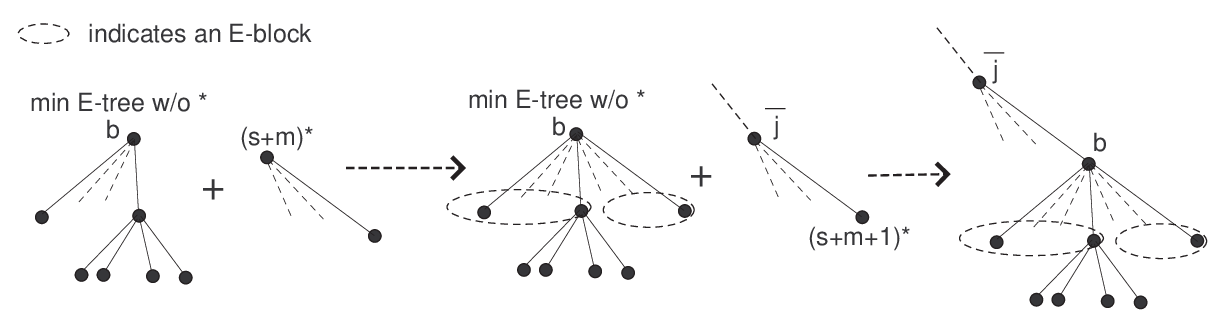}
		\caption{Appearing as the rightmost E-block.}
\label{fig:bad-good}
	\end{figure}

Accordingly, in the resulting tree $g(F)$, there are $y+(s-y-l_o)$ even-level internal vertices
whose rightmost children are leaves. Based on Lemma~\ref{lem:num-eblock},
we conclude that the number of E-blocks in $g(F)$ is $y+(s-y-l_o)+l_o=s$.

{Note that in $g$ only the relative order of the starred labels matters.}

We next show that $h$ and $g$ satisfy $g (h(T))=T$. Given $T$ and the corresponding forest $F=h(T)$, suppose the resulting tree after removing the first small tree (determined by a vertex $v$) is $T'$ and the resulting forest with the first small tree removed from $F$ is $F'$.
Without loss of generality, we assume $v$ is an even-level internal vertex and initially has the unstarred label $1$ and has only one block.
Let $T''$ and $F''$ be obtained by replacing the induced (by $1$) starred label $(k+1)^*$ in $T'$ and $F'$ with $1$, respectively. Suppose the blocks of $T''$ are induced by $T$ instead of being determined by definition in $T''$ itself. By construction, $T$ and $T''$ only differ at vertex $1$.

\emph{Claim~$1$.} Let $F'''$ be obtained from $F''$ by shifting each starred E-label (O-label in case of $v$ being an odd-level internal vertex) if any down by one. Then, $h(T'')=F'''$.

\emph{ Induction base.} Obviously, if $T$ has only one block, then $F$ has a unique small tree, and clearly $g (h(T))=T$.

\emph{ Induction step.} Suppose $g(F''')=T''$. As already remarked, this implies $g(F'')=T''$.
Our objective is to show that $g(F)=T$.
Let $\overline{F''}$ be the forest after the first merger of $g$ from $F$. It is obvious that $F''$ and $\overline{F''}$ differ only at the trees carrying vertex $1$ exactly as $T$ and $T''$ differ at vertex $1$.

\emph{Claim~$2$.} After the same number of mergers from $\overline{F''}$ and $F''$, the respective resulting forests still only differ at vertex $1$ as described above.\\
This follows from the fact that subsequent mergers never involve vertex $1$.
Note that by induction hypothesis $F''$ eventually arrives at $T''$.
We knew $T$ is the object that differs with $T''$ only at vertex $1$,
thus $\overline{F''}$ must eventually arrives at $T$,
see Figure~\ref{fig:show-bij}.
Therefore, $g(F)=g(h(T))=T$,
 completing the induction proof,
 and the theorem follows.
 \end{proof}
\begin{figure}[!htb]
		\centering
		\includegraphics[width=0.85\textwidth]{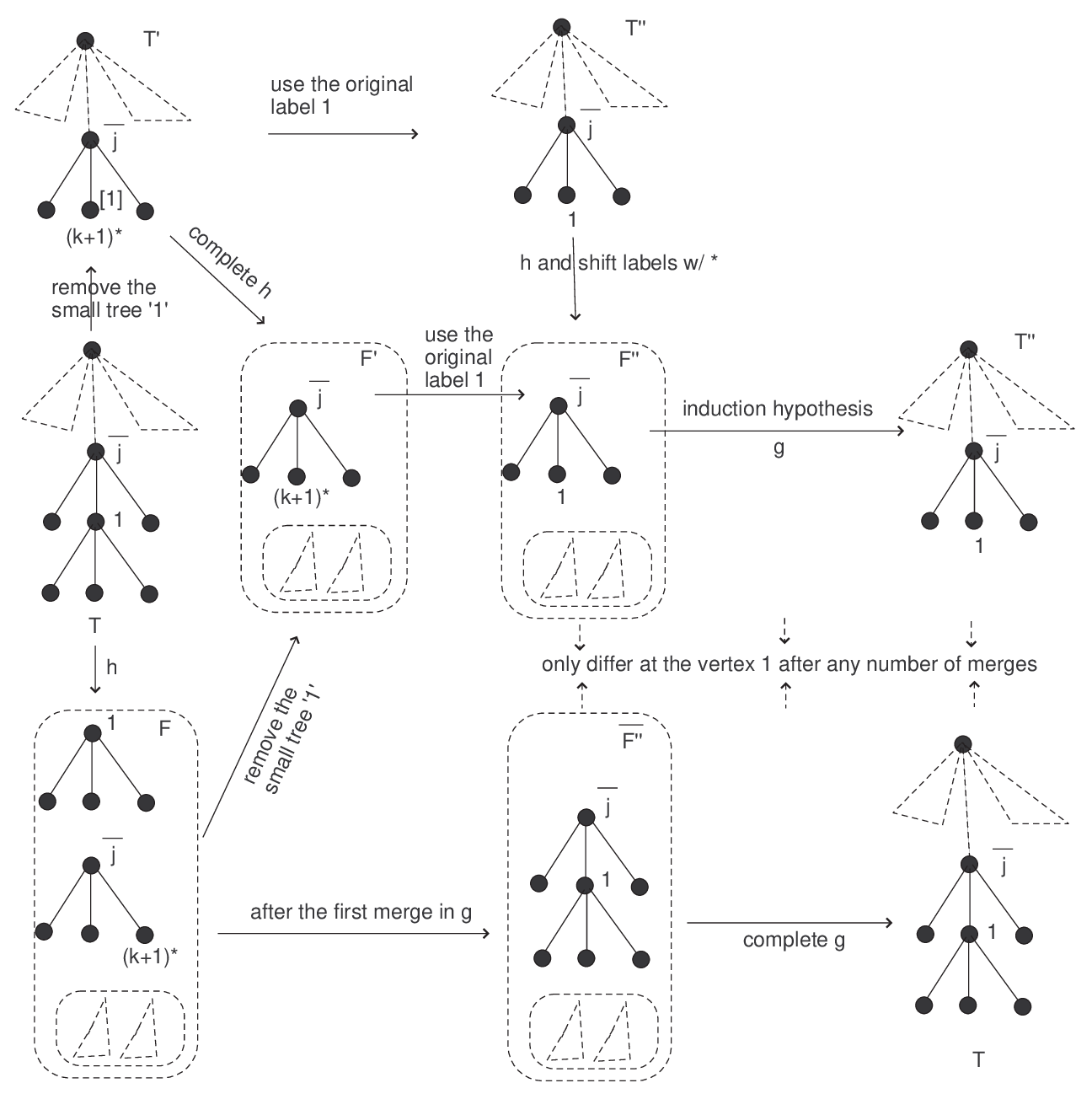}
		\caption{$g\circ h =id$.}
\label{fig:show-bij}
	\end{figure}

Based on Lemma~\ref{lem:loop-length}, Proposition~\ref{prop:eblock} and Theorem~\ref{thm:bij2}, we obtain the following theorem concerning enumeration of RNA secondary structures by the joint size distribution of helices and loops.

\begin{theorem}\label{thm:distribution}
The number of RNA secondary structures with $b+1$
base pairs, $k>1$ isolated bases, $l_e$ partial stacks and $s$ helices such that
the size distribution of the helices is $1^{a_1}2^{a_2} \cdots (b+1)^{a_{b+1}}$
and the size distribution of the loops is $1^{c_1} 2^{c_2} \cdots (b+k)^{c_{b+k}}$,
where $\sum_i ia_i=b+1$, $\sum_i a_i =s$, $\sum_i i c_i= b+k$ and $l_o=\sum_{i>1} c_i$,
is given by
\begin{align*}
\frac{s!}{\prod_{i>0} a_i !} \frac{(l_o-1)!}{\prod_{i>1} c_i !}  {k-1\choose l_e-1} {s-1 \choose l_o-1}{l_o \choose l_e+l_o-s}\;.
 \end{align*}
\end{theorem}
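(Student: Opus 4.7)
The approach is to combine Chen's bijection with Theorem~\ref{thm:bij2} to reduce the enumeration to a constrained count of labelled forests. By Lemma~\ref{lem:loop-length} and Proposition~\ref{prop:eblock}, an RNA secondary structure with the stated parameters corresponds bijectively to an unlabelled set-alternating E-tree having $k$ even-level and $b+1$ odd-level vertices, with $l_e$ even-level internals, $l_o$ odd-level internals, $s$ E-blocks realizing the helix distribution $\{a_j\}$, and odd-level outdegrees realizing the loop-size distribution $\{c_i\}$ (outdegree $i-1$ for loop size $i$, so $c_1=b+1-l_o$). Labelling the tree over $E=[k]$ and $O=[\overline{b+1}]$ multiplies the count by $k!(b+1)!$, and summing over the number $y$ of young even-level internal vertices while applying Theorem~\ref{thm:bij2} reduces the problem to counting labelled forests in $\mathbb{F}(y)$.

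For fixed $y$, I would factor $|\mathbb{F}(y)|$ into: (a) assigning helix sizes to the $s$ small E-trees and loop sizes to the $l_o$ small O-trees, giving $\frac{s!}{\prod_j a_j!}\cdot\frac{l_o!}{\prod_{i>1} c_i!}$; (b) distributing the four types $A,B_1,B_2,D$ over the $s$ small E-trees with counts $(y,\,l_e-y,\,l_o-l_e+y,\,s-y-l_o)$; (c) choosing which $l_e-1$ of the $k-1$ O-tree leaf positions hold starred E-labels, contributing $\binom{k-1}{l_e-1}$; (d) placing the $k$ unstarred E-labels and the $b+1$ unstarred O-labels into their designated positions, contributing $k!(b+1)!$; and (e) arranging the $s-1$ ordered starred E-labels and the $l_o$ ordered starred O-labels subject to (c*) and (d*). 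The slot-ordering implicit in (a)--(b) will be corrected by a final division by $s!\,l_o!$, and the factor $k!(b+1)!$ from (d) cancels the $1/(k!(b+1)!)$ from the division that turns labelled trees into unlabelled trees.

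The main obstacle is step (e). Viewing the ordered starred E-labels as a word of length $s-1$ over $\{L,B_2,D\}$ with letter counts $(l_e-1,\,l_o-l_e+y,\,s-y-l_o)$, condition (c*) forces the first $y$ letters to be $L$ and (d*) forces each non-final $D$ to be immediately followed by $L$. Collapsing each mandatory $DL$ into a single atom and summing over whether a ``free'' $D$ occupies the last position, the count reduces via the identity $(l_e+l_o-s)+(s-y-l_o)=l_e-y$ to
\[
W(y)=\frac{(l_o-1)!\,(l_e-y)}{(l_o-l_e+y)!\,(s-y-l_o)!\,(l_e+l_o-s)!}.
\]
Closing the argument then rests on the identity
\[
l_o\,(l_e-1)!\sum_y\frac{W(y)}{y!\,(l_e-y)!}=\binom{s-1}{l_o-1}\binom{l_o}{l_e+l_o-s},
\]
which follows by factoring $\binom{l_e-1}{y}$ out of the sum and applying the Vandermonde convolution $\sum_k\binom{l_e-1}{k}\binom{s-l_e}{s-l_o-k}=\binom{s-1}{l_o-1}$. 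Assembling the remaining factors from (a)--(d) and dividing by $k!(b+1)!$ yields the claimed formula; the six RNA structures on $n=6$ with two non-auxiliary arcs (four parameter classes with counts $1,2,1,2$) provide a sanity check.
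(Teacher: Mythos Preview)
Your argument is correct and follows the same overall strategy as the paper: translate to set-alternating E-trees via Lemma~\ref{lem:loop-length} and Proposition~\ref{prop:eblock}, pass to labelled forests via Theorem~\ref{thm:bij2}, sum over the number $y$ of young even-level internals, and simplify. Your two-case split on whether the final letter of the word is $D$ is exactly the paper's split on whether $(k+s-1)^*$ is a type-$D$ root, and your $W(y)$ coincides with the paper's combined case count $\binom{l_e-y}{l_e+l_o-s}\binom{l_o-1}{l_e-y-1}$.

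Where you depart from the paper is in the bookkeeping and in the closing identity. The paper assigns labels directly (choose the $l_o$ unstarred $O$-roots, the $l_e$ unstarred $E$-roots, then the starred $E$-roots via the two-case non-consecutive count, then permute leaves), obtaining $\sum_y\binom{l_e}{y}W(y)$ and collapsing it with Sprugnoli's identity $\sum_k\binom{x}{k}\binom{y}{n-k}\binom{k}{j}=\binom{x}{j}\binom{y+x-j}{n-j}$. You instead work with ordered E- and O-tree slots, assign sizes and the four types $(A,B_1,B_2,D)$ separately, fill unstarred labels in $k!(b{+}1)!$ ways, count starred placements as $W(y)\,(l_e{-}1)!\,(l_o{-}l_e{+}y)!\,(s{-}y{-}l_o)!\,l_o!$, and undo the slot ordering by $s!\,l_o!$; this produces $s!\,l_o!\,(l_e{-}1)!\sum_y W(y)/\bigl(y!(l_e{-}y)!\bigr)$ and collapses via the ordinary Vandermonde convolution. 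Your route is a bit more elementary (Vandermonde in place of Sprugnoli) and the word-over-$\{L,B_2,D\}$ picture is a clean way to encode conditions~(c*) and~(d*); the paper's direct label assignment is closer to the bijection itself and avoids the auxiliary slot ordering. Either way, the same product emerges.
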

\begin{proof}
Following from Lemma~\ref{lem:loop-length} and Proposition~\ref{prop:eblock}, the secondary structures correspond to plane trees with $k$ even-level vertices,
$b+1$ odd-level vertices, $s$ E-blocks, $l_e$ even-level internal vertices and $l_o$ odd-level internal vertices. Using the bijection in Theorem~\ref{thm:bij2},
it suffices to enumerate the corresponding forests of the labelled trees from the latter.
Such a forest can be successively constructed as follows.

{\bf Label the roots of the small O-trees.} Note that there are $l_o$ small O-trees and
the corresponding roots have unstarred labels from $O=[\overline{b+1}]$.
So there are ${b+1 \choose l_o}$ different choices. (At the beginning,
we just need to determine which labels are roots.)

{\bf Label the roots of the small E-trees with unstarred roots.} It is clear that there are $l_e$ small E-trees (corresponding to $l_e$ partial stacks) with unstarred roots
and we have ${k\choose l_e}$ ways to pick $l_e$ unstarred labels from $E=[k]$.

{\bf Label the roots of the small E-trees with starred roots.}
\begin{figure}[!htb]
		\centering
		\includegraphics[width=1.0\textwidth]{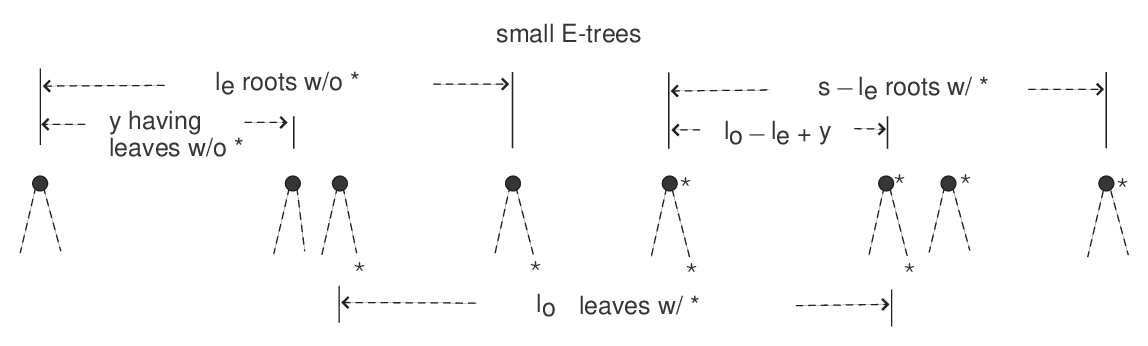}
		\caption{The distribution of the four types of small E-trees.}
\label{fig:small-e-trees}
	\end{figure}
Note that not every starred E-label can be a root of some small E-tree due to properties (c*)
and (d*) in Theorem~\ref{thm:bij2}
and thus we need first to exclude these starred E-labels.
Suppose there are $y$ small trees with unstarred roots
and unstarred leaves, see Figure~\ref{fig:small-e-trees}. There are ${l_e \choose y}$
different possibilities for assigning roots for these $y$ small trees from the $l_e$ unstarred labels picked in the last step. Accordingly, the smallest $y$ starred E-labels are leaves of some small O-trees by (c*) and thus cannot
be roots.
We know that there are in total $s$ small E-trees and $l_o$ of them have their rightmost leaves being starred.
Thus, there are $s-l_o-y$ small E-trees with starred roots but without starred leaves whose roots need to be determined.
Note that these starred E-labels can not be consecutive to guarantee that if $(k+m)^*$ is a root then
$(k+m+1)^* \leq (k+s-1)^*$ must be a leaf by (d*).
Here we distinguish two cases, see Figure~\ref{fig:non-consecutive}:
\begin{itemize}
\item The case of $(k+s-1)^*$ being a root. In this case, we have to pick $s-l_o-y-1$ non-consecutive labels from $(s-1)-y-2$
consecutive labels (i.e.~those unused and smaller than $(k+s-2)^*$), which can be done in
$$
{[(s-1)-y-2]-[s-l_o-y-1]+1 \choose s-l_o-y-1}={l_o-1 \choose s-l_o-y-1}
$$
different ways. In this case, the $s-l_o-y-1$ labels immediately following these non-consecutive labels
have to be assigned to some small O-trees as leaves.
Now there are $(s-1)-y-2[s-l_o-y]+1=y+2l_o-s$ unused starred labels,
and there are $l_o-(l_e-y)$ undetermined roots of small E-trees with starred roots.
Thus, we have ${y+2l_o-s \choose l_o-(l_e-y)}$ different choices for these roots and any remaining labels are assigned to leaves of small O-trees.
\begin{figure}[!htb]
		\centering
		\includegraphics[width=0.9\textwidth]{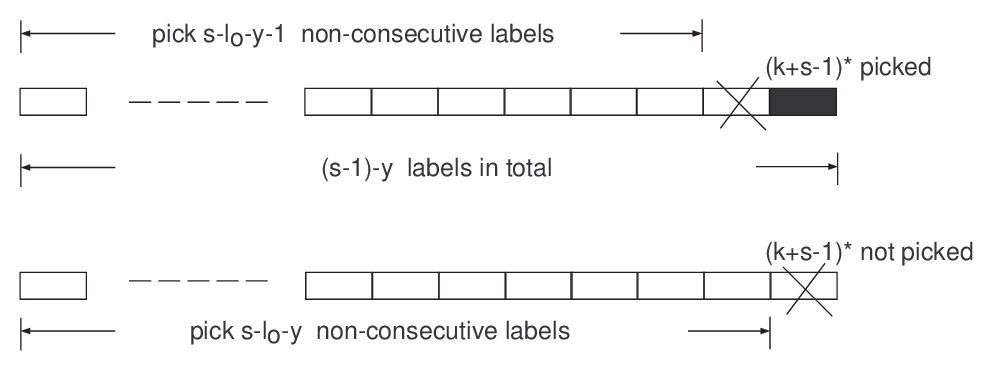}
		\caption{The two cases when picking non-consecutive labels.}
\label{fig:non-consecutive}
	\end{figure}
 \item The case of $(k+s-1)^*$ not being a root. In this case, we have to pick $s-l_o-y$ non-consecutive labels from $(s-1)-y-1$
consecutive labels (i.e.~those unused and smaller than $(k+s-1)^*$), which can be done in
$$
{[(s-1)-y-1]-[s-l_o-y]+1 \choose s-l_o-y}={l_o-1 \choose s-l_o-y}
$$
different ways. In this case, the $s-l_o-y$ labels immediately and respectively following these non-consecutive labels
have to be assigned to some small O-trees as leaves.
Now there are $(s-1)-y-2[s-l_o-y]=y+2l_o-s-1$ unused starred labels,
and there are $l_o-(l_e-y)$ undetermined roots of small E-trees with starred roots.
Thus, we have ${y+2l_o-s-1 \choose l_o-(l_e-y)}$ different choices for these roots and any remaining labels are assigned to leaves of small O-trees.
\end{itemize}

{\bf Label the leaves of the small O-trees.}
We first fix a linear order of the roots for the small O-trees, say in increasing order.
In order to guarantee the outdegree distribution of odd-level internal vertices, the size distribution of these (ordered)
small O-trees is a sequence of the type $1^{c_2} 2^{c_3} \cdots (b+k-1)^{c_{b+k}}$.
Obviously, there are $\frac{l_o!}{c_2! \cdots c_{b+k}!}$ distinct such sequences.
For each such a sequence, the leaves of these small O-trees are
a permutation of the $k-1$ unused labels from the set $[k] \bigcup \{ (k+1)^{*}, \ldots, (k+s-1)^*\}$,
that contributes the factor $(k-1)!$.

{\bf Label the leaves of the small E-trees.}
As for the leaves of the small $E$-trees, we fix a linear order of the roots for small E-trees.
Note that we have already determined which small E-trees (in terms of the roots) have starred O-labels (as their rightmost leaves).
Thus,
for each fixed sequence of type $1^{a_1} \cdots (b+1)^{a_{b+1}}$, we have to arrange the $l_o$ starred O-labels in $l_o!$ ways and arrange the $b+1-l_o$ unused labels from the set $[\overline{b+1}]$
in $(b+1-l_o)!$ different ways.

As a result, the number of such forests equals
\begin{align*}
& {b+1 \choose l_o}  {k\choose l_e} \sum_{y}{l_e \choose y}\frac{l_o!}{c_2! \cdots c_{b+k}!} (k-1)! l_o!(b+1-l_o)! \frac{s!}{a_1! \cdots a_{b+1}!}\\
 &\quad \times \left[{l_o-1 \choose s-l_o-y-1} {y+2l_o-s \choose l_o+y-l_e} + {l_o-1 \choose s-l_o-y} {y+2l_o-s-1 \choose l_o+y-l_e}\right]\\
 =& {b+1 \choose l_o}  {k\choose l_e} \sum_{y} {l_e \choose y}\frac{l_o! (k-1)! l_o!}{c_2! \cdots c_{b+k}!}  \frac{s! (b+1-l_o)!}{a_1! \cdots a_{b+1}!}
 {l_e-y \choose l_e+l_o-s}{l_o-1 \choose l_e-y-1},
\end{align*}
where the summation is over all possible values for $y$.
Since $k>1$, there is at least one odd-level internal vertex which is a child of the root of the tree,
implying at least one small E-tree with an unstarred root and a starred O-label.
Thus, we first require $y\leq l_e-1$. It is also possible that there is no young even-level internal vertices at all, thus $y\geq 0$. On the other hand, we know there are $l_e-y$ small E-trees with unstarred roots and starred leaves, since there are at most $l_o$ starred leaves, we have
$1\leq l_e-y \leq l_o$. Furthermore, ${l_o-1 \choose l_e-y-1}=0$ if $0\leq l_o-1< l_e-y-1$.
In view of this we have $0\leq y \leq l_e-1$.
It has been shown in Sprugnoli~\cite[eq.~$(6.14)$]{sprugnoli} that
$$
\sum_{k=0}^n {x \choose k}{y \choose n-k} {k \choose j}={x\choose j}{y+x-j \choose n-j}.
$$
Accordingly, we obtain
\begin{align*}
&\sum_{y=0}^{l_e-1} {l_e \choose y} {l_e-y \choose l_e+l_o-s}{l_o-1 \choose l_e-y-1}\\
=&\sum_{y=0}^{l_e-1} \frac{l_e}{l_e-y}{l_e-1 \choose y} \frac{l_e-y}{l_e+l_o-s}{l_e-y-1 \choose l_e+l_o-s-1}{l_o-1 \choose l_e-y-1}\\
=& \frac{l_e}{l_e+l_o-s} {l_o-1 \choose l_e+l_o-s-1} {l_e-1+l_o-1-(l_e+l_o-s-1) \choose l_e-1-(l_e+l_o-s-1)},
\end{align*}
that allows us to get a closed form for the total number of forests.
Dividing the total number of forests by $(b+1)! k!$, we obtain the desired number of RNA secondary structures and the proof is complete.
\end{proof}

Removing the restriction on the number of partial stacks, we obtain
\begin{corollary}
The number of RNA secondary structures with $b+1$
base pairs, $k>1$ isolated bases, $s$ helices such that
the size distribution of the helices is $1^{a_1}2^{a_2} \cdots (b+1)^{a_{b+1}}$
and the size distribution of the loops is $1^{c_1} 2^{c_2} \cdots (b+k)^{c_{b+k}}$,
where $\sum_i ia_i=b+1$, $\sum_i a_i =s$, $\sum_i i c_i= b+k$ and $l_o=\sum_{i>1} c_i$,
is given by
\begin{align*}
\frac{s!}{\prod_{i>0} a_i !} \frac{(l_o-1)!}{\prod_{i>1} c_i !}  {k-1+l_o\choose s-1} {s-1 \choose l_o-1} \; .
 \end{align*}
\end{corollary}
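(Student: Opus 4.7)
\medskip

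The plan is to obtain the corollary by summing the formula in Theorem~\ref{thm:distribution} over all admissible values of the partial stack count $l_e$, since the corollary is exactly the statement one gets by forgetting this parameter. All other parameters ($b$, $k$, $s$, the helix size profile $(a_i)$, and the loop size profile $(c_i)$, hence $l_o$) remain fixed, and each RNA secondary structure counted by the corollary has some well-defined $l_e$, so the count equals $\sum_{l_e} N(l_e)$ where $N(l_e)$ is the formula from Theorem~\ref{thm:distribution}.

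Pulling out of the sum all factors that do not depend on $l_e$, namely
$$
\frac{s!}{\prod_{i>0} a_i !} \frac{(l_o-1)!}{\prod_{i>1} c_i !} \binom{s-1}{l_o-1},
$$
the task reduces to evaluating
$$
S \;:=\; \sum_{l_e} \binom{k-1}{l_e-1}\binom{l_o}{l_e+l_o-s}.
$$
Rewriting the second binomial as $\binom{l_o}{s-l_e}$ (using the symmetry $\binom{l_o}{j}=\binom{l_o}{l_o-j}$) and substituting $m=l_e-1$ yields
$$
S \;=\; \sum_{m\geq 0} \binom{k-1}{m}\binom{l_o}{s-1-m} \;=\; \binom{k-1+l_o}{s-1},
$$
by the Vandermonde convolution. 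Plugging this back gives exactly the expression claimed in the corollary.

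The main thing to verify is that the range of $l_e$ over which we sum is honest, i.e.~that the terms outside $\{l_e : 1\leq l_e \leq k,\; s-l_o \leq l_e \leq s\}$ automatically vanish so that the unrestricted Vandermonde sum is valid. This is immediate: $\binom{k-1}{l_e-1}=0$ unless $1\leq l_e\leq k$, and $\binom{l_o}{l_e+l_o-s}=0$ unless $s-l_o\leq l_e\leq s$, so extending to $m\geq 0$ and dropping the upper bound introduces only zero summands. No further combinatorial argument is needed; the only nontrivial ingredient beyond Theorem~\ref{thm:distribution} is this one application of Vandermonde's identity.
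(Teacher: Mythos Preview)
Your proof is correct and follows essentially the same approach as the paper: sum Theorem~\ref{thm:distribution} over $l_e$, factor out the $l_e$-independent terms, and collapse $\sum_{l_e}\binom{k-1}{l_e-1}\binom{l_o}{l_e+l_o-s}$ to $\binom{k-1+l_o}{s-1}$ via Vandermonde. The paper's proof is just the one-line version of yours, stating the sum and the result without spelling out the Vandermonde step or the range check that you (correctly) make explicit.
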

\begin{proof}
We just need to sum over all possible $1\leq l_e \leq s$:
\begin{align*}
&\sum_{l_e=1}^s \frac{s!}{\prod_{i>0} a_i !} \frac{(l_o-1)!}{\prod_{i>1} c_i !}  {k-1\choose l_e-1} {s-1 \choose l_o-1}{l_o \choose l_e+l_o-s}\\
=& \frac{s!}{\prod_{i>0} a_i !} \frac{(l_o-1)!}{\prod_{i>1} c_i !}  {k-1+l_o\choose s-1} {s-1 \choose l_o-1} \; ,
\end{align*}
completing the proof.
\end{proof}

\subsection{Exact enumeration by helices}
Now we enumerate RNA secondary structures simply by the number of helices alone.
We augment the formula considering the minimum helix size $\sigma$.
\begin{theorem}\label{thm:stack}
The number of RNA secondary structures with $b+1$
base pairs, $k>1$ isolated bases and $s$ helices such that
the size distribution of the helices is $1^{a_1}2^{a_2} \cdots (b+1)^{a_{b+1}}$ is
\begin{align*}
\frac{s!}{a_1! \cdots a_{b+1}!}\frac{1}{k-1}  \sum_{l_o=1}^s  {k-1 \choose l_o} {s-1 \choose l_o-1} {k-1+l_o \choose s-1} \; .
\end{align*}
Furthermore, the number of RNA secondary structures with $b+1$
base pairs, $k>1$ isolated bases and $s$ helices such that
each helix contains at least $\sigma$ base pairs
is given by
\begin{align*}
 {b-(\sigma-1)s \choose s-1} \frac{1}{k-1}\sum_{l_o=1}^s  {k-1 \choose l_o} {s-1 \choose l_o-1} {k-1+l_o \choose s-1} \; .
\end{align*}

\end{theorem}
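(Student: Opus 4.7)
The plan is to derive both formulas as marginalizations of the Corollary immediately preceding the theorem. That Corollary gives the exact count for any fixed pair of helix-size and loop-size distributions, so both parts of Theorem~\ref{thm:stack} amount to summing it over appropriate families of distributions, with no new bijective argument required.

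For the first formula I fix the helix-size distribution $(a_i)$ and sum over all valid loop-size distributions. The loop data enter the Corollary only through the factor $(l_o-1)!/\prod_{i>1} c_i!$, subject to the constraints $c_1=b+1-l_o$, $\sum_{i>1} c_i = l_o$, and $\sum_{i>1} i c_i = k-1+l_o$ (the last one coming from $\sum_i i c_i = b+k$ after removing the $c_1$-contribution). Setting $d_i=c_{i+1}$ for $i\geq 1$ recasts these as $\sum_{i\geq 1} d_i = l_o$ and $\sum_{i\geq 1} i d_i = k-1$, so $l_o!\sum 1/\prod d_i!$ counts compositions of $k-1$ into $l_o$ positive parts, giving ${k-2 \choose l_o-1}$. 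Dividing by $l_o$ and using the identity $\frac{1}{l_o}{k-2 \choose l_o-1}=\frac{1}{k-1}{k-1 \choose l_o}$, then substituting back into the Corollary and summing over $l_o=1,\dots,s$, yields the first claim.

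For the second formula I sum the first formula over all helix-size distributions $(a_i)$ satisfying $a_i=0$ for $i<\sigma$, $\sum_i a_i = s$, and $\sum_i i a_i = b+1$. Only the multinomial factor $s!/(a_1!\cdots a_{b+1}!)$ depends on $(a_i)$, so the sum counts ordered compositions of $b+1$ into $s$ parts each at least $\sigma$. The standard shift $p_i\mapsto p_i-(\sigma-1)$ converts this into counting compositions of $b+1-s(\sigma-1)$ into $s$ positive parts, of which there are ${b-s(\sigma-1) \choose s-1}$. Replacing the helix-distribution multinomial by this binomial factor in the first formula produces the second.

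The main obstacle will be the careful bookkeeping of the constraints when reducing the sums to composition counts, particularly the reindexing $c_i\mapsto d_{i-1}$ that transforms the mixed constraint $\sum_{i>1} i c_i = k-1+l_o$ into the clean $\sum_{i\geq 1} i d_i = k-1$. Once that substitution is made the two composition-counting steps and the binomial identity used to rewrite $\frac{1}{l_o}{k-2 \choose l_o-1}$ are routine, so no additional combinatorial input is needed beyond Theorem~\ref{thm:distribution} and its Corollary.
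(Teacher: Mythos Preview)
Your proposal is correct and follows essentially the same route as the paper: both reduce the problem to recognizing the loop-side sum as counting compositions of $k-1$ into $l_o$ positive parts (yielding $\binom{k-2}{l_o-1}$, then rewritten via $\tfrac{1}{l_o}\binom{k-2}{l_o-1}=\tfrac{1}{k-1}\binom{k-1}{l_o}$), and the helix-side sum as counting compositions of $b+1$ into $s$ parts each at least $\sigma$. The only cosmetic difference is that you start from the Corollary (where $l_e$ has already been summed out), whereas the paper starts one step earlier from Theorem~\ref{thm:distribution} and performs the $l_e$-summation inside the same computation.
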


\begin{proof}
In view of Theorem~\ref{thm:distribution}, we notice that, as long as
the number $s$ of E-blocks, the number $l_e$ of even-level internal vertices and the number $l_o$ of odd-level internal vertices are determined,
the enumeration of RNA secondary structures reduces to counting
the number of certain integer compositions, as each integer composition is associated with the same number
$$
{b+1\choose l_o} (k-1)!l_o!(b+1-l_o)! {k\choose l_e} \frac{l_e}{l_e+l_o-s}{s-1 \choose s-l_o}{l_o-1 \choose l_e+l_o-s-1}
$$
of labelled plane trees.
For the former part of the theorem, it reduces to counting
the number of integer compositions of $k-1$ into $l_o$ parts and $b+1$ into $s$ parts of type $1^{a_1}2^{a_2} \cdots (b+1)^{a_{b+1}}$ which are ${k-2 \choose l_o-1}$ and $\frac{s!}{a_1! \cdots a_{b+1}!}$, respectively.
Dividing $(b+1)!k!$ and summing over all possible $l_e$ and $l_o$, we have the desired number for the former part to be
\begin{align*}
& \frac{s!}{a_1! \cdots a_{b+1}!}\frac{1}{k}\sum_{l_e=1}^s {k\choose l_e} \sum_{l_o=1}^s {k-2 \choose l_o-1} \frac{l_e}{l_e+l_o-s}{s-1 \choose s-l_o}{l_o-1 \choose l_e+l_o-s-1}\\
= & \frac{s!}{a_1! \cdots a_{b+1}!} \frac{1}{k-1}\sum_{l_o=1}^s  {k-1 \choose l_o} {s-1 \choose l_o-1} {k-1+l_o \choose s-1}.
\end{align*}
For the latter part of the theorem,
since we require each helix to have size at least $\sigma$, we actually
need to consider the number of integer compositions of $b+1$ into $s$
parts such that each part has size at least $\sigma$ which is given by
${b-(\sigma-1)s \choose s-1}$. Each such an integer composition can be obtained as follows: allocate $\sigma-1$ to
each part first and then combine an arbitrary integer composition of $(b+1)-s(\sigma-1)$
into $s$ parts.
Dividing $(b+1)!k!$ and summing over all possible $l_e$ and $l_o$, we have the desired number to be
\begin{align*}
{b-(\sigma-1)s \choose s-1} \frac{1}{k-1}\sum_{l_o=1}^s  {k-1 \choose l_o} {s-1 \choose l_o-1} {k-1+l_o \choose s-1},
\end{align*}
completing the proof.
\end{proof}

\begin{example}
Some values obtained from Theorem~\ref{thm:stack} are shown in Table~\ref{table1} and Table~\ref{table2}.
For instance, for $b=2,~k=3,~\sigma=1$, the number of these RNA secondary structures with $s=3$ helices given by Theorem~\ref{thm:stack} is $9$,
and these secondary structures are presented in Figure~\ref{fig2}.

\begin{table}[h!]

\caption{The number of secondary structures for $\sigma=1$.}
\label{table1}
\begin{center}
\begin{tabular}{|c|c|c|c|c|c|c|c|c|c|}
\hline
& $b=2$ & $b=2$ & $b=2$ & $b=3$ & $b=3$ & $b=3$ & $b=4$ & $b=4$ & $b=4$\\
& $k=3$ & $k=4$ & $k=5$ & $k=3$ & $k=4$ & $k=5$ & $k=3$ & $k=4$ & $k=5$\\\hline
$s=1$ & $1$ & $1$ & $1$ & $1$ & $1$ & $1$ & $1$ & $1$ & $1$\\ \hline
$s=2$ & $10$ & $18$ & $28$ & $15$ & $27$ & $42$ & $20$ & $36$ & $56$ \\\hline
$s=3$ & $9$ & $31$ & $76$ & $27$ & $93$ & $228$ & $54$ & $186$ & $456$ \\\hline
$s=4$ & $0$ & $0$ & $0$ & $7$ & $54$ & $219$ & $28$ & $216$ & $876$ \\\hline
$s=5$ & $0$ & $0$ & $0$ & $0$ & $0$ & $0$ & $2$ & $51$ & $375$ \\\hline
\end{tabular}
\end{center}
\end{table}
\begin{table}[h!]

\caption{The number of secondary structures for $\sigma=2$.}
\label{table2}
\begin{center}
\begin{tabular}{|c|c|c|c|c|c|c|c|c|c|}
\hline
& $b=2$ & $b=2$ & $b=2$ & $b=3$ & $b=3$ & $b=3$ & $b=4$ & $b=4$ & $b=4$\\
& $k=3$ & $k=4$ & $k=5$ & $k=3$ & $k=4$ & $k=5$ & $k=3$ & $k=4$ & $k=5$\\\hline
$s=1$ & $1$ & $1$ & $1$ & $1$ & $1$ & $1$ & $1$ & $1$ & $1$\\ \hline
$s=2$ & $0$ & $0$ & $0$ & $5$ & $9$ & $14$ & $10$ & $18$ & $28$ \\\hline
\end{tabular}
\end{center}
\end{table}

	\begin{figure}[!htb]
		\centering
		\includegraphics[width=0.9\textwidth]{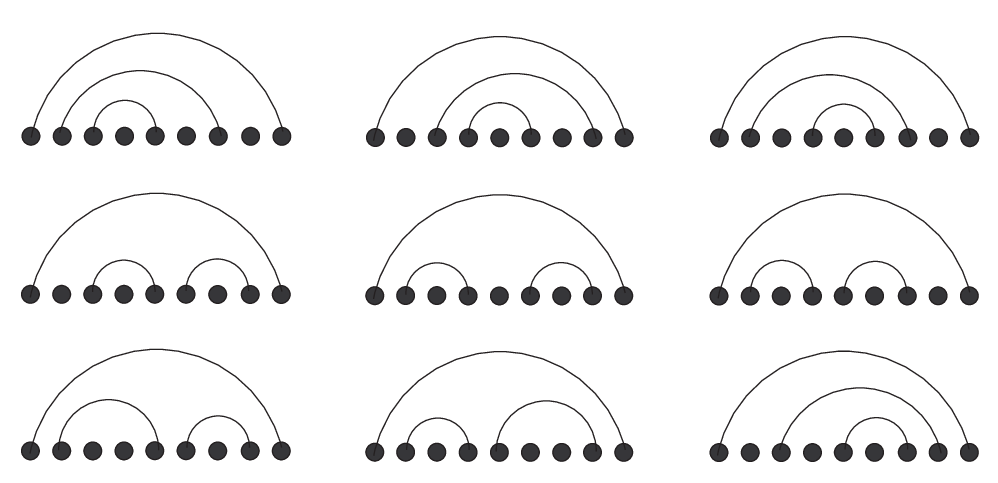}
		\caption{Nine RNA secondary structures for $(b,k,s)=(2,3,3)$.}
\label{fig2}
	\end{figure}

\end{example}

In particular, Theorem~\ref{thm:stack} allows us to compute the exact average number of helices and average size of helices. The asymptotics of these quantities were first obtained by Hofacker, Schuster and Stadler (1998).
For instance, it was shown that for $\sigma=1$, the average number of helices over secondary structures of length $n$ approaches $\frac{(1-\alpha)^2 (1+ \alpha)}{3-2\alpha} n$;
the probability for a helix having a size $l$ approaches $\frac{1-\alpha^2}{\alpha^2} \alpha^{2 l}$ for some $\alpha$.

As a consequence of Theorem~\ref{thm:stack}, we have

\begin{corollary}
Let $R$ be an RNA secondary structure chosen uniformly randomly from the set of RNA secondary structures with $b+1$
base pairs, $k$ isolated bases and $s$ helices such that
each has size at least $\sigma$.
Then, the probability of $R$ to have a helix size distribution $1^{a_1}2^{a_2} \cdots (b+1)^{a_{b+1}}$
is
$$
\frac{s! (s-1)! (b-\sigma s+1)!}{a_1!\cdots a_{b+1}! (b-\sigma s+s)!}
$$
where $a_i=0$ for $i<\sigma$.
\end{corollary}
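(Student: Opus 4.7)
The plan is to obtain the probability as the ratio of two counts, both already supplied by Theorem~\ref{thm:stack}. Specifically, the numerator of the probability is the number of RNA secondary structures with the prescribed helix size distribution $1^{a_1}2^{a_2}\cdots (b+1)^{a_{b+1}}$, which is given by the first formula of Theorem~\ref{thm:stack}; the constraint $a_i=0$ for $i<\sigma$ automatically enforces that every helix has size at least $\sigma$, so this count is consistent with the conditioning event. The denominator is the total number of RNA secondary structures with $b+1$ base pairs, $k$ isolated bases, $s$ helices, each of size at least $\sigma$, which is given by the second formula of Theorem~\ref{thm:stack}.

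Forming the ratio, the common factor
$$\frac{1}{k-1}\sum_{l_o=1}^s {k-1 \choose l_o}{s-1\choose l_o-1}{k-1+l_o\choose s-1}$$
appears in both numerator and denominator and cancels. What remains is
$$\frac{s!}{a_1!\cdots a_{b+1}!}\Bigg/{b-(\sigma-1)s \choose s-1}.$$

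The last step is to rewrite the binomial coefficient in the denominator in factorial form. Using
$${b-(\sigma-1)s \choose s-1}=\frac{(b-\sigma s + s)!}{(s-1)!\,(b-\sigma s +1)!},$$
the ratio collapses to
$$\frac{s!\,(s-1)!\,(b-\sigma s+1)!}{a_1!\cdots a_{b+1}!\,(b-\sigma s + s)!},$$
which is precisely the claimed probability. There is no real obstacle here: the work has been done in Theorem~\ref{thm:stack}, and the corollary is a direct consequence of the fact that its two formulas share the same $l_o$-sum factor, leaving a clean cancellation.
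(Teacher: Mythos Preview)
Your proposal is correct and matches the paper's intent: the paper presents this corollary as an immediate consequence of Theorem~\ref{thm:stack} without supplying a separate proof, and your argument---taking the ratio of the two formulas in Theorem~\ref{thm:stack}, cancelling the shared $l_o$-sum, and rewriting ${b-(\sigma-1)s\choose s-1}$ in factorial form---is exactly the intended one-line derivation.
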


\section{Conclusion}
In this paper, we obtained exact formulas counting RNA secondary structures
with a given number of helices as well as a given joint size distribution of helices and loops. Our approach was combinatorial, carefully analyzing the new bijection
between RNA secondary structures and plane trees discovered by the first author and a variation of Chen's bijective approach of counting trees by forests
of simple trees. While there are lots of results on the combinatorics of RNA
structures in the literature, only a few of them provide exact enumeration and
the rest provide asymptotic results. Those asymptotic results usually rely on
certain singularity analysis. However, singularity analysis of multivariate functions is not easy, which makes it hard to obtain enumerative results involving
multiple parameters, such as the joint distribution of helices and loops, and
thus restricts finer analyses of the space of RNA structures. In this regard,
our approach has advantages and may be further explored to enumerate RNA
structures filtered by other parameters and their combinations. We conclude
the paper with the following additional comments.
Partial stacks serve as an important intermediate parameter in order to
study other parameters such as helices and hairpin loops, when using Chen's
bijection. The distribution of partial stacks represents the distribution of left-end clusters of base pairs in RNA secondary structures. We may analogously
define ``partial stacks" associated with right-end clusters. However, the biological significance of partial stacks remains unclear at the moment, which may
be worthy of future investigation.

As one anonymous referee pointed out, it might be useful to consider the
requirement of the minimal base pair span $\theta$ (i.e.~if $(i,j)$ is
a base pair, then $|i-j|> \theta$), which is usually set to $3$.
The requirement of the minimal base
pair span to be $\theta$ is essentially the same as requiring the minimum length of
a hairpin loop to be $\theta$.
Based on the new bijection between RNA secondary
structures and plane trees, with extra effort, we can handle this requirement
as well. In fact, we have another separate paper which is particularly concerned
with hairpin loops, helices and bulges~\cite{chxx}.

\section*{Acknowledgments}
We would like to thank the anonymous referees for the valuable comments and
suggestions which improved the presentation of the paper.


\begin{thebibliography}{99}

	

	\bibitem{BR} Bender EA, Richmond LB (1998) A multivariate Lagrange inversion formula for asymptotic calculations. Electron. J. Combin. 5(1): \#R33.
	
\bibitem{bill} Chen WYC (1990) A general bijective algorithm for trees. Proc. Natl. Acad. Sci. USA 87: 9635--9639.

	
	\bibitem{chen3} Chen RXF (2019) A new bijection between RNA secondary structures and plane trees and its consequences. Electron. J. Combin. 26(4): P4.48.
\bibitem{chxx} The joint distribution of hairpins and stacks of RNA secondary structures,

\bibitem{clote} Clote P (2006) Combinatorics of saturated secondary structures of RNA. J. Comp. Biol. 13: 1640--1657.
\bibitem{clote2} Clote P, Ponty Y, Steyaert JM (2012) Expected distance between terminal nucleotides of RNA secondary structures. J. Math. Biol. 65: 581--599.
	\bibitem{gessel} Gessel IM (1987) A combinatorial proof of the multivariate Lagrange inversion formula. J. Combin. Theory Ser. A 45: 178--195.
	
\bibitem{HSS} Hofacker IL, Schuster P, Stadler PF (1998) Combinatorics of RNA secondary structures. Discrete Appl. Math. 88: 207--237.
\bibitem{HP} Heitsch C, Poznanovi\'{c} S (2013) Combinatorial insights into RNA secondary structure. Discrete and Topological Models in Molecular Biology: 145--166.
\bibitem{han-reidys} Han HSW, Reidys CM (2012) The $5^{\prime}$-$3^{\prime}$ distance of RNA secondary structures. J. Comp. Biol. 19: 867--878.
\bibitem{shape} Lorenz W, Ponty Y, Clote P (2008) Asymptotics of RNA shapes. J. Comp. Biol. 15: 31--63.

\bibitem{LR} Li TJX, Reidys CM (2018) The rainbow spectrum of RNA secondary structures. Bull. Math. Biol. 80: 1514--1538.
\bibitem{LW2} Liao B, Wang T (2003) General combinatorics of RNA hairpins and cloverleaves. J. Chem. Inf. Comput. Sci. 43(4): 1138--1142.
\bibitem{LW} Liao B, Wang T (2004) General combinatorics of RNA secondary structure. Math. Biosci. 191: 69--81.


 \bibitem{nebel} Nebel ME (2003) Combinatorial properties of RNA secondary structures. J. Comp. Biol. 9(3): 541--574.
 \bibitem{penner-waterman} Penner RC, Waterman MS (1993) Spaces of RNA secondary structures. Adv. Math. 101: 31--49.


 \bibitem{TDD} Do\v{s}li\'{c} T, Svrtan D, Veljan D (2004) Enumerative aspects of secondary structures. Discrete Math. 285: 67--82.
 \bibitem{sprugnoli} Sprugnoli R (2006) Riordan array proofs of identities in Gould’s book.
	\bibitem{waterman} Schmitt WR, Waterman WS (1994) Linear trees and RNA secondary structure. Discrete Appl. Math. 51(3): 317--323.
	\bibitem{waterman4} Smith TF, Waterman MS (1978) RNA secondary structure. Math. Biol. 42: 31--49.
	\bibitem{stein-waterman} Stein PR, Waterman MS (1979) On some new sequences generalizing the Catalan and Motzkin numbers. Discrete Math. 26: 261--272.
	\bibitem{waterman2} Waterman MS (1978) Secondary structure of single-stranded nucleic acids, in
	Rota G.-C. (ed) Studies on foundations and combinatorics, Advances in mathematics
	supplementary studies, Academic Press N.Y., vol 1, pp. 167--212.
	\bibitem{waterman3} Waterman MS (1979) Combinatorics of RNA hairpins and cloverleaves, Stud.
	Appl. Math. 60(2): 91--98.
	
	


\end{thebibliography}
\end{document}